\definecolor{hot}{RGB}{65,105,225}
\newcommand{\commentOut}[1]{}
\theoremstyle{plain}
\newtheorem{theorem}{Theorem}[section]
\newtheorem{prop}[theorem]{Proposition}
\newtheorem{thrm}[theorem]{Theorem}
\theoremstyle{definition}
\newtheorem{defn}[theorem]{Definition}
\newtheorem{rmk}[theorem]{Remark}
\newtheorem*{ex*}{Example}
\newcommand{\cO}{{\mathcal O}}
\newcommand{\cM}{{\mathcal M}}
\newcommand{\cV}{{\mathcal V}}
\newcommand{\bZ}{{\mathbb{Z}}}
\newcommand{\bC}{{\mathbb{C}}}
\newcommand{\bA}{{\mathbb{A}}}
\newcommand{\bN}{{\mathbb{N}}}
\def\bz{{\mathbf{0}}}
\newcommand{\ubul}{{\,\begin{picture}(-1,1)(-1,-3)\circle*{2}\end{picture}\ }}
\DeclareMathOperator{\id}{id}
\DeclareMathOperator{\pic}{Pic}
\DeclareMathOperator{\rank}{rank}
\DeclareMathOperator{\enmo}{End}
\DeclareMathOperator{\spec}{Spec}
\DeclareMathOperator{\Def}{Def}
\DeclareMathOperator{\length}{length}                    
\DeclareMathOperator{\ord}{ord}                    
\DeclareMathOperator{\Hom}{Hom}
\DeclareMathOperator{\HH}{H}
\DeclareMathOperator{\MC}{MC}
\DeclareMathOperator{\mld}{mld}
\def\ra{\rightarrow}
\def\al{\alpha}
\newcommand\be{\begin{equation}}
\newcommand\ee{\end{equation}}
\def\Linf{L_\infty}
\def\Art{{\mathcal{A}rt}}
\def\Set{{\mathcal{S}et}}
\def\eps{\epsilon}
\def\mm{{\mathfrak{m}}}
\def\wh{\widehat}
\def\pa{\partial}
\def\length{{\rm{length}}}
\title[Local structure of theta divisors and related loci of generic curves]{Local structure of theta divisors and related loci of generic curves}
\author{Nero Budur}\address{Department of Mathematics, KU Leuven, Celestijnenlaan 200B, 3001 Leuven, Belgium} \email{nero.budur@kuleuven.be}
\begin{document}

\begin{abstract} 
For a generic compact Riemann surface the theta function is at every point on the Jacobian equal to its first Taylor term, up to a holomorphic change of local coordinates and multiplication by a local holomorphic unit. More generally, any Brill-Noether locus of twisted stable vector bundles on a smooth projective curve is at every point $L$ locally \'etale isomorphic with its tangent cone if the Petri map at $L$ is injective. This assumption has  various consequences for Brill-Noether loci: positive answers to the monodromy conjecture for generalized theta divisors and to questions of Schnell-Yang on log resolutions and Whitney stratifications, and formulas for local $b$-functions, log canonical thresholds, topological zeta functions, and minimal discrepancies.
\end{abstract}

\maketitle

\setcounter{tocdepth}{1}

\numberwithin{equation}{section}

\tableofcontents

\section{Introduction} 

The original motivation for this work came from Schnell-Yang \cite[Problem 9.2]{SY} who asked whether the Brill-Noether stratification of the theta divisor of a generic curve is a Whitney stratification, and whether by blowing up the strata one at a time in increasing dimension one achieves a log resolution of singularities. Another motivation was the curiosity to understand if the deformation theory with cohomology constraints from \cite{BW, BR} can lead to something new in a classical subject such as Brill-Noether loci of curves. In {\it loc. cit.} this theory was applied to seemingly more complicated objects, but it is supposed to be the right theory to deal  with local aspects of all cohomology jump  loci. Using this theory  positive answers to both questions from above are given here. In fact the main result, Theorem \ref{thrmLCThrk} computes many singularity invariants of theta divisors of generic curves, and provides generalizations to twisted and higher-rank Brill-Noether loci. 

These results follow from the  characterization of the local germs of these loci as linear determinantal varieties given in Theorem \ref{thmGenToConeEt}. The answers to the two questions from above on the theta divisor of generic curves follow from the more elementary version for classical Brill-Noether loci of line bundles, Theorem \ref{thrmWrd}. After the first version of this article it was pointed out to me that Theorem \ref{thrmWrd} is known or at least expected by some experts although there seems to be no reference available. A more elementary proof based on the global determinantal description of classical Brill-Noether loci of line bundles suffices to prove Theorem \ref{thrmWrd}. This does not work for twisted Brill-Noether loci as there is in general no such determinantal description even Zariski locally. Thus for Theorem \ref{thmGenToConeEt} one needs  the full power of the deformation theory with cohomology constraints. This proof boils down to the more elementary one for Theorem \ref{thrmWrd}. Next, the results are stated in detail.

Let $C$ be a smooth projective curve over an algebraically closed field $K$ of characteristic zero. 
Consider  the Brill-Noether locus 
$$
W_d^r:=\{L\in\pic^d(C)\mid h^0(L)>r\}
$$
 of isomorphism classes of line bundles $L$ of degree $d$ with at least $r+1$ linearly independent global sections, endowed with the natural scheme structure, where $r\in\bN$.  Let 
 $$
\pi_{L}:\HH^0(C,L)\otimes \HH^0(C,\omega_C\otimes L^{-1})\to \HH^0(C,\omega_C)
 $$
 be the Petri map of $L$, where  $\omega_C$ is the canonical sheaf of $C$. Regarding the tangent cones of $W^r_d$  one has:
 
\begin{thrm} {\rm{(}\cite{Ke}, \cite[VI, Thm. 2.1]{A+}\rm{)}} Let $L\in\pic^d(C)$ with $0\neq h^0(L)h^1(L)$. 
If $\pi_L$ is injective, the tangent cone $TC_LW_d^r$    is the closed 
  subscheme defined by the ideal generated by the minors of size $h^0(L)-r$ of the  $h^1(L)\times h^0(L)$ matrix of linear forms on $\HH^1(C,\cO_C)$ given by  $\pi_{L}$. (For out-of-bounds minors see \ref{subNot} for  definition.)
\end{thrm}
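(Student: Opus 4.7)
The plan is to realize $W_d^r$ locally around $L$ as the degeneracy locus of a morphism between vector bundles on $\pic^d(C)$, identify the linear part of this morphism with the Petri map, and then invoke Petri injectivity to promote the obvious set-theoretic determinantal description of the tangent cone to a scheme-theoretic one.

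For the first step, I choose an effective divisor $D$ on $C$ of sufficiently high degree so that, for the Poincar\'e line bundle $\cL$ on $C\times\pic^d(C)$, both $\cE^0:=p_{2*}\cL(D)$ and $\cE^1:=p_{2*}(\cL(D)|_{D\times\pic^d(C)})$ are locally free on $\pic^d(C)$. The fibrewise restriction-of-sections map $\phi\colon\cE^0\to\cE^1$ satisfies $\ker\phi=p_{2*}\cL$ and $\operatorname{coker}\phi=R^1p_{2*}\cL$, so $W_d^r$ is cut out, as a scheme, by the $(\rank\cE^0-r)\times(\rank\cE^0-r)$ minors of $\phi$. Trivialising $\cE^0$ and $\cE^1$ near $L$ with bases adapted to $\HH^0(C,L)\hookrightarrow\cE^0_L$ and $\cE^1_L\twoheadrightarrow\HH^1(C,L)$ puts $\phi_L$ in the form $\operatorname{diag}(I,0)$; an invertible row-column reduction in a neighbourhood of $L$ eliminates the $I$-block, producing an $h^1(L)\times h^0(L)$ matrix $M$ with entries in the maximal ideal at $L$, whose $(h^0(L)-r)$-minors generate the local ideal of $W_d^r$.

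The next step is to identify the linear part $M_{\mathrm{lin}}$ of $M$. A standard Kodaira--Spencer computation shows that, for $\xi\in T_L\pic^d(C)=\HH^1(C,\cO_C)$, the derivative $d\phi_L(\xi)$ induces on the subquotient $\ker\phi_L\to\operatorname{coker}\phi_L$ the cup-product map $\HH^0(C,L)\to\HH^1(C,L)$, $s\mapsto\xi\cdot s$. Dualising via Serre duality, the $(i,j)$-entry of $M_{\mathrm{lin}}$, viewed as a linear form on $\HH^1(C,\cO_C)=\HH^0(C,\omega_C)^{\vee}$, is precisely $\pi_L(s_i\otimes t_j)\in\HH^0(C,\omega_C)$, for chosen bases $\{s_i\}$ of $\HH^0(C,L)$ and $\{t_j\}$ of $\HH^0(C,\omega_C\otimes L^{-1})$.

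The third and crucial step is the scheme-theoretic equality
\[
TC_LW_d^r=V\bigl(\text{minors of size }h^0(L)-r\text{ of }M_{\mathrm{lin}}\bigr).
\]
The inclusion $\subseteq$ is automatic, since each minor of $M_{\mathrm{lin}}$ is the degree-$(h^0(L)-r)$ leading form of the corresponding minor of $M$ and hence lies in the initial ideal defining $TC_LW_d^r$. The reverse inclusion is the main obstacle and is where Petri injectivity enters crucially: it forces the $h^0(L)h^1(L)$ entries of $M_{\mathrm{lin}}$ to be linearly independent elements of $\HH^0(C,\omega_C)$, so that by Hochster--Eagon the ideal of minors of $M_{\mathrm{lin}}$ is Cohen--Macaulay of the expected codimension $(r+1)(g-d+r)$. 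Combined with the general lower bound $\dim_LW_d^r\geq g-(r+1)(g-d+r)$ (which, again under Petri injectivity, is an equality at $L$), a codimension and Hilbert-series comparison between the initial ideal of the minors of $M$ and the ideal of minors of $M_{\mathrm{lin}}$ forces the two ideals to coincide.
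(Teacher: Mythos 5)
Your first three steps are the classical Kempf/ACGH route and are correct: realize $W_d^r$ as the degeneracy locus of a two-term complex built from a Poincar\'e bundle and an auxiliary effective divisor, reduce locally at $L$ to an $h^1(L)\times h^0(L)$ matrix $M$ with entries in $\mm_L$, and identify the linear part $M_{\mathrm{lin}}$ with (the matrix of) the Petri map. Note, however, that the paper does not give its own proof of this theorem at all---it is cited from Kempf and from ACGH---and when the paper does (re)prove it in the more general Theorem \ref{thmGenToCone} (with the even stronger conclusion of a formal isomorphism between $(W^r_d,L)$ and the tangent cone), the method is entirely different: deformation theory with cohomology constraints via $L_\infty$ pairs, in which the determinantal structure of the tangent cone comes out of the cohomology-jump ideals of a universal matrix $d_{\mathrm{univ}}$ rather than from a Poincar\'e bundle degeneracy locus. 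So your proposal, while following a legitimate classical path, is genuinely disjoint in method from the paper.

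Where your argument has a real gap is in the last step. The inclusion of the minors of $M_{\mathrm{lin}}$ in $\operatorname{in}(I)$ (where $I$ is the local ideal of $W^r_d$) is fine, and Hochster--Eagon does give that the ideal $J$ of minors of $M_{\mathrm{lin}}$ is prime and Cohen--Macaulay of codimension $(r+1)(g-d+r)$. But the equality of dimensions only yields the \emph{set-theoretic} statement $V(\operatorname{in}(I))_{\mathrm{red}}=V(J)$; to promote this to an equality of ideals you assert ``a codimension and Hilbert-series comparison,'' yet you give no independent handle on the Hilbert series of $\operatorname{in}(I)$ (equivalently, of the local ring $\cO_{W^r_d,L}$), which is precisely what is at stake. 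As stated, the reasoning is circular or at least incomplete: Hochster--Eagon controls $J$, not $\operatorname{in}(I)$. The way to close the argument cleanly---and the way the paper's proof of Theorem \ref{thm1GDik} in fact closes it---is to use Petri injectivity more directly: the $h^0(L)h^1(L)\le g$ entries of $M$ have linearly independent linear parts, hence form part of a regular system of parameters at $L$, so after a formal change of coordinates one may take the entries of $M$ to be coordinate functions. Then $I$ is literally generated by the minors of a generic linear matrix, which is a homogeneous ideal, and $\operatorname{in}(I)=I=J$ on the nose, with no multiplicity or Eagon--Northcott input needed. If you prefer to keep the Hilbert-series strategy, you must supply the missing ingredient (typically, exactness of the relevant Lascoux--Eagon--Northcott complex for $M$ together with the degeneration of that complex to the one for $M_{\mathrm{lin}}$); but the coordinate-change argument is shorter and is what Petri injectivity is really for.
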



The local embedded structure of $W_d^r\subset \pic^d(C)$ is completely determined if $\pi_L$ is injective.
Denote by $\bz$  the origin of $\HH^1(C,\cO_C)$ identified with the tangent space of $\pic^d(C)$ at $L$.

\begin{thrm}\label{thrmWrd} If $\pi_L$ is injective there is a local $K$-isomorphism for the \'etale topology between $(\pic^d(C),L)$ and  $(\HH^1(C,\cO),\bz)$ inducing    a local $K$-isomorphism for the \'etale topology between $(W_d^r,L)$
 and $(TC_LW_d^r,\bz)$.
\end{thrm}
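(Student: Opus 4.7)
The plan is to use a Kempf-style determinantal model of $W_d^r$ around $L$, show that injectivity of $\pi_L$ allows every higher-order perturbation of its defining matrix to be absorbed into a formal change of coordinates on $\pic^d(C)$, and then invoke Artin approximation.

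\textbf{Determinantal model.} Fix an effective divisor $E$ on $C$ of degree large enough that $\HH^1(C, L'(E)) = 0$ for every $L'$ in an open neighborhood $U$ of $L$. After passing to an étale cover if needed so that a Poincaré bundle $\cL$ exists on $C \times U$, pushforward of $0 \to \cL \to \cL(E) \to \cL(E)|_{E \times U} \to 0$ by the second projection gives a map $\alpha$ of locally free $\cO_U$-modules whose kernel and cokernel fibrewise at $L' \in U$ are $\HH^0(L')$ and $\HH^1(L')$. Over the formal completion $A := \wh{\cO}_{\pic^d(C), L} \cong K[[t_1, \ldots, t_g]]$, a standard block-triangular $GL \times GL$ reduction brings $\alpha$ to $\left(\begin{smallmatrix} I_N & 0 \\ 0 & \beta \end{smallmatrix}\right)$ with $\beta : A^{h^0(L)} \to A^{h^1(L)}$ having entries in the maximal ideal $\mm$, and the ideal of $W_d^r$ in $A$ is generated by the $(h^0(L)-r)$-minors of $\beta$. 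By the first theorem, the linearization $\beta_1 \in (\mm/\mm^2) \otimes \Hom(\HH^0(L), \HH^1(L))$ corresponds to the cup product $\mu_L : W \to \Hom(\HH^0(L), \HH^1(L))$, where $W := \HH^1(C, \cO_C)$.

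\textbf{Inductive linearization.} By Serre duality, injectivity of $\pi_L$ is equivalent to surjectivity of $\mu_L$. I proceed by induction on $k \geq 1$: assume a formal automorphism $\phi$ of $(\spec A, \mm)$ has been constructed with $\phi^*\beta \equiv \beta_1 \pmod{\mm^{k+1}}$ (the base case $k = 1$ holds trivially with $\phi = \mathrm{id}$), and consider the obstruction $\gamma := (\phi^*\beta - \beta_1) \bmod \mm^{k+2}$, which lies in $S^{k+1} W^* \otimes \Hom(\HH^0(L), \HH^1(L))$. Surjectivity of $\mu_L$ yields a decomposition $\gamma = \sum_j \mu_L(e_j)\, q_j$ with $q_j \in S^{k+1} W^*$, and the further automorphism $\psi(t_j) = t_j - q_j$ modifies the degree-$(k+1)$ part of $\phi^*\beta$ by $-\sum_j \mu_L(e_j) q_j = -\gamma$ while perturbing higher-degree pieces only in orders $\geq 2k+1 \geq k+2$. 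Taking the $\mm$-adic limit produces a formal automorphism $\hat\phi$ with $\hat\phi^*\beta = \beta_1$ as matrices over $A$, so $\hat\phi^*$ carries the minor ideal of $\beta$ to that of $\beta_1$, i.e.\ the ideal of $W_d^r$ to that of $TC_LW_d^r$.

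\textbf{Formal to étale.} Since $\hat\phi$ is a formal solution of the finite system of polynomial equations $\phi^*\beta_{ij} = (\beta_1)_{ij}$ in the unknown coefficients of $\phi$, Artin approximation yields an étale local isomorphism of $(\pic^d(C), L)$ with $(\HH^1(C, \cO_C), \bz)$ satisfying the same equations, hence identifying $W_d^r$ and $TC_LW_d^r$ in an étale neighborhood.

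\textbf{Main obstacle.} The substantive step is recognizing the $k$-th order obstruction as an element of $S^{k+1} W^* \otimes \Hom(\HH^0(L), \HH^1(L))$ and verifying that it lies in the image of $\mathrm{id} \otimes \mu_L$; once that cohomological identification is in place, the rest combines the classical Kempf determinantal setup, Serre duality, and Artin approximation in standard ways.
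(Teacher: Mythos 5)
Your proposal is essentially correct and gives a genuine second route to the theorem, quite different from the paper's. The paper derives Theorem \ref{thrmWrd} as a special case of Theorem \ref{thmGenToConeEt}, whose formal half (Theorem \ref{thmGenToCone}) is proved via deformation theory with cohomology constraints: the $L_\infty$ pair on $(\HH^\ubul(\enmo E),\HH^\ubul(E\otimes F))$ produces a universal matrix $d_{univ}$ over $\widehat S$, and the linearization is done in a single stroke in Theorem \ref{thm1GDik} by observing that injectivity of $\pi$ makes the entries of the linear part $B$ linearly independent, so one can extend them to a formal coordinate system and map $B_{ij}\mapsto d_{ij}$; the resulting substitution is an automorphism carrying $d_{univ}$ to $B$. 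You instead work with the classical Kempf determinantal model of $W_d^r$ coming from the direct-image sequence of a Poincar\'e bundle, and you linearize the matrix $\beta$ inductively, absorbing the degree-$(k{+}1)$ obstruction $\gamma$ via surjectivity of $\mu_L$ (the Serre dual of injectivity of $\pi_L$) and passing to the $\mm$-adic limit. Both arguments use the same hypothesis in equivalent guises, but your inductive construction is the iterative version of what the paper packages into one change of variables; the paper's formulation is shorter, while yours is more self-contained and avoids the $L_\infty$/dgl machinery (including the subtlety, needed in the paper, that $\HH^0(\enmo E)=K\cdot\id_E$ kills the homotopy equivalence). Your argument is specific to the $n=1$ case, whereas the paper's machinery gives the twisted higher-rank case for free, but for Theorem \ref{thrmWrd} itself that is no loss.

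There is, however, a real gap in your passage from formal to \'etale, precisely the step the paper handles carefully in Proposition \ref{propArSteroids}. You perform the $GL\times GL$ block reduction ``over the formal completion $A$,'' so the entries of $\beta$ are a priori formal power series, and the system $\phi^*\beta_{ij}=(\beta_1)_{ij}$ is then not a system of polynomial (or even algebraic) equations in the unknowns $Y_i=\phi^*(y_i)$ — Artin approximation in the form you are invoking does not directly apply. The fix is standard but must be said: because $\alpha(L)$ has rank $N$, the top $N\times N$ minor is invertible on a Zariski-open neighborhood of $L$, so the block reduction can be carried out with algebraic entries, making $\beta_{ij}(Y)=(\beta_1)_{ij}(x)$ an honest polynomial system with your $\hat\phi$ as a formal solution. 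You also do not check that the approximating $Y'$ is \'etale at the origin, nor that it identifies the full ideals rather than just the exhibited generators; both follow from agreement with $\hat\phi$ modulo $\mm^2$ together with a Nakayama/minimal-generators argument, which is exactly the extra bookkeeping (the auxiliary matrix $A$ with full rank mod $\mm$, and the rank and Jacobian open conditions) that the paper builds into its Artin system in Proposition \ref{propArSteroids}. With those additions your proof is complete.
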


This implies the corresponding statement for local analytic germs in the case $K=\bC$. This seems known or at least expected by some experts. If $C$ is a generic curve with fixed genus then $\pi_L$ is always injective by  \cite{Gie}. Hence the above theorems apply to generic curves. A particular case is:

\begin{thrm}\label{thmThetaJac} At every point of the Jacobian  $J(C)$ of a generic compact Riemann surface $C$, there exists a holomorphic change of coordinates such that Riemann's theta function $\theta$ is equal to its first Taylor term  times a holomorphic unit.
\end{thrm}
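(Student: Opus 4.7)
The plan is to derive Theorem~\ref{thmThetaJac} directly from Theorem~\ref{thrmWrd} applied to the theta divisor viewed as a Brill--Noether locus, plus the elementary observation that a local isomorphism of ambient smooth germs identifies two principal hypersurface equations up to a local unit.

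First I would identify, via translation by a fixed theta characteristic, the pair $(J(C),\Theta)$ with $(\pic^{g-1}(C),W_{g-1}^0)$, so that any point of $J(C)$ corresponds to some $L\in\pic^{g-1}(C)$. Since $C$ is generic, Gieseker's theorem \cite{Gie} ensures that the Petri map $\pi_L$ is injective at every $L$. Applying Theorem~\ref{thrmWrd} with $d=g-1$, $r=0$, and $K=\bC$ then provides, at each point $L$, a local biholomorphism of analytic germs
\[
\phi\colon (J(C),L)\xrightarrow{\sim}(\HH^1(C,\cO_C),\bz)
\]
restricting to an isomorphism of germs $(\Theta,L)\xrightarrow{\sim}(TC_L\Theta,\bz)$.

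Next, $\Theta$ is a principal Cartier divisor on $J(C)$, locally at $L$ cut out by the single holomorphic function $\theta$ (obtained from Riemann's theta function after choosing a local trivialization of the principal polarization). By the intrinsic definition of tangent cone of a hypersurface, $TC_L\Theta=V(\theta_k)$, where $\theta_k$ denotes the lowest-degree nonzero homogeneous part of the Taylor expansion of $\theta$ at $L$ in linear coordinates on $T_LJ(C)=\HH^1(C,\cO_C)$. Since $\phi$ pulls the principal ideal $(\theta_k)$ back to the principal ideal $(\theta)$, the two generators must differ by a local holomorphic unit: $\phi^{*}\theta_k = u\cdot \theta$. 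Reading this in the new analytic coordinates $\tilde z := \phi(z)$ on the germ of $J(C)$ at $L$ gives $\theta(\tilde z)=u(\tilde z)^{-1}\theta_k(\tilde z)$. Since $u(\bz)^{-1}\neq 0$, the first Taylor term of $\theta$ in these new coordinates is precisely $u(\bz)^{-1}\theta_k$, so $\theta$ equals its own first Taylor term times a local holomorphic unit, as required.

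The essential input is Theorem~\ref{thrmWrd}; the remainder is the tautology that an isomorphism of ambient smooth germs sending one principal hypersurface germ onto another must equate their defining equations up to a local unit. The only bookkeeping to verify carefully is that the intrinsic initial-ideal description $V(\theta_k)$ of the tangent cone agrees with the determinantal one provided by Theorem~1.1, but this is automatic: both compute $\operatorname{Spec}\gr_{\mathfrak m_L}\cO_{\Theta,L}$. I expect no other real obstacle, since the analytic flavor of the statement over $K=\bC$ is precisely what \'etale-local isomorphism delivers in the complex analytic topology.
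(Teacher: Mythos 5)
Your proposal is correct and follows essentially the same route the paper takes: Theorem~\ref{thmThetaJac} is stated in the paper immediately after the remark that Theorem~\ref{thrmWrd} implies the analytic-germ statement over $\bC$ and that Gieseker's theorem supplies injectivity of every $\pi_L$ for a generic curve, so the paper is implicitly invoking exactly the chain you spell out (identify $(J(C),\Theta)$ with $(\pic^{g-1}(C),W^0_{g-1})$, apply Theorem~\ref{thrmWrd} with $d=g-1$, $r=0$, observe that the tangent cone of the hypersurface is $V(\theta_k)$, and note that a germ isomorphism matching the two principal divisors equates their defining equations up to a unit). You supply the bookkeeping the paper omits, and it checks out; the only additional remark I'd make is that at points off $\Theta$ the statement is already trivial since $\theta$ is a local unit, which your framing absorbs silently.
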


 To state a generalization to twisted stable vector bundles we first fix notation. Let $g$ be the genus of the   smooth projective curve $C$  over the field $K$.  Let $n>0, d\ge 0, k> 0$ in $\bZ$. Fix a vector bundle $F$ on $C$. Let $\cM_{n,d}$ be  the moduli space of stable vector bundles on $C$ of rank $n$ and degree $d$. Let
$$
\cV_{n,d,k}(F):=\{E\in \cM_{n,d}\mid h^{0}(C,E\otimes F)\ge k\}
$$
be endowed with the natural structure of closed subscheme of $\cM_{n,d}$. 
 Set $\cV_{n,d,k}=\cV_{n,d,k}(\cO_C)$. When $\cM_{n,d}$ is fixed from the context, we set $$\cV_k(F)=\cV_{n,d,k}(F)\quad\text{ and }\quad\cV_k=\cV_k(\cO_C).$$  Then $\cV_k(F)$ form a filtration of closed subschemes
 $
 \cM_{n,d}=\cV_0(F)\supset\cV_1(F)\supset\cV_2(F)\supset\ldots.
 $
  For $E\in\cM_{n,d}$ the {\it Petri map} is the natural multiplication map
$$
\pi_{E,F}: \HH^0(C,E\otimes F)\otimes \HH^0(C,E^\vee\otimes F^\vee\otimes \omega_C) \to \HH^0(C,E\otimes E^\vee\otimes\omega_C)
$$
Set $\pi_{E}=\pi_{E,\cO}$. Set 
$l=h^0(C,E\otimes F)$, $l'=h^1(C,E\otimes F)$. Then 
\be\label{eqChi}
l-l'=\chi(E\otimes F) =n\deg(F)-\rank (F)(n(g-1)-d).
\ee
We assume $l\ge 1$. If $E$ is a line bundle, that is $n=1$, we denote it by $L$ to stress this fact. Note that when $n=1$, $\cM_{1,d}=\pic^d(C)$ and $\cV_k=W^{k-1}_d$.

\begin{theorem}\label{thmGenToConeEt} There is a canonical isomorphism of $K$-vector spaces between the tangent space $T_E\cM_{n,d}$ and  $\HH^1(C,E\otimes E^\vee)$. Assume $\pi_{E,F}$ is  injective. Then:
\begin{itemize}
\item There is a local $K$-isomorphism for the \'etale topology between $(\cM_{n,d},E)$ and  $(\HH^1(C,E\otimes E^\vee),\bz)$ inducing  for every $1\le k\le l$ local $K$-isomorphisms for the \'etale topology between $(\cV_{k}(F),E)$
 and $(TC_E\cV_k(F),\bz)$.
\item The tangent cone $TC_E\cV_{k}(F)$ is the closed 
  subscheme defined by the ideal generated by the minors of size $l-k+1$ of the  $l'\times l$ matrix of linear forms on $\HH^1(C,E\otimes E^\vee)$ given by $\pi_{E,F}$. 
  \end{itemize}
 \end{theorem}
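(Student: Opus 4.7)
The plan is to present $\cV_k(F)$ étale-locally as a degeneracy locus of an $l'\times l$ matrix $\alpha$ varying over a neighborhood of $E$ in $\cM_{n,d}$, identify the derivative of $\alpha$ with the transpose of the Petri map under Serre duality, and then apply the inverse function theorem to trivialize the family when $\pi_{E,F}$ is injective.

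The canonical identification $T_E\cM_{n,d}\cong\Ext^1(E,E)=\HH^1(C,E\otimes E^\vee)$ is standard deformation theory of vector bundles; since $C$ is a curve the obstruction space $\Ext^2(E,E)=\HH^2(C,E\otimes E^\vee)$ vanishes, so $\cM_{n,d}$ is smooth at $E$ and étale-locally isomorphic to $(\HH^1(C,E\otimes E^\vee),\bz)$, independently of any Petri hypothesis. Next, on an étale neighborhood $U\to\cM_{n,d}$ of $E$ admitting a universal family $\cE$ on $U\times C$, cohomology and base change yields a two-term complex $[\cH_0\xrightarrow{\alpha}\cH_1]$ of locally free $\cO_U$-modules computing $Rp_*(\cE\otimes F)$, where $p:U\times C\to U$. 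After shrinking $U$ and adapting bases to $\HH^0(E\otimes F)$ and $\HH^1(E\otimes F)$ one can arrange $\cH_0\cong\cO_U^l$, $\cH_1\cong\cO_U^{l'}$, and $\alpha(E)=0$; the universal property then identifies $\cV_k(F)\cap U$ with the subscheme cut out by the $(l-k+1)$-minors of the $l'\times l$ matrix $\alpha$.

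The conceptual heart is the identification of the differential
\[
d\alpha_E:\HH^1(C,E\otimes E^\vee)\lra\Hom(\HH^0(C,E\otimes F),\HH^1(C,E\otimes F))
\]
with the transpose of $\pi_{E,F}$ under Serre duality. This is the cup-product / Atiyah-class formula for the derivative of $Rp_*$ along the universal deformation, generalizing the classical Kempf identity underlying the line-bundle case (Theorem~\ref{thrmWrd}); carefully tracing the Atiyah class of $\cE$ through the construction of $[\cH_0\to\cH_1]$ is the main technical step. Granting this identification, injectivity of $\pi_{E,F}$ is equivalent to surjectivity of $d\alpha_E$, and hence $\alpha:U\to\Mat_{l'\times l}$ is a smooth morphism at $E$.

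A smooth morphism is étale-locally a projection, so after shrinking there is an étale isomorphism of a neighborhood of $E$ in $U$ onto $\Mat_{l'\times l}\times Z$ with $Z$ smooth of dimension $\dim\ker d\alpha_E$, under which $\alpha$ corresponds to the first projection. Choosing a linear splitting $T_E\cM_{n,d}=\ker d\alpha_E\oplus K'$ with $d\alpha_E|_{K'}\colon K'\xrightarrow{\sim}\Mat_{l'\times l}$ and arranging the étale identification so that $Z$ corresponds étale-locally to $\ker d\alpha_E$ (possible since both are smooth of the same dimension), one obtains an étale local isomorphism $(\cM_{n,d},E)\cong(T_E\cM_{n,d},\bz)$ under which $\alpha$ corresponds to $d\alpha_E$. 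Consequently, $\cV_k(F)$ corresponds to $(d\alpha_E)^{-1}(D_{l-k})\subset T_E\cM_{n,d}$, where $D_{l-k}\subset\Mat_{l'\times l}$ is the variety of matrices of rank $\leq l-k$. Since $\alpha(E)=0$, the initial linear terms of the $(l-k+1)$-minors of $\alpha$ coincide with the $(l-k+1)$-minors of the matrix of linear forms $d\alpha_E$, yielding simultaneously the stated description of $TC_E\cV_k(F)$ via the Petri map and the étale local isomorphism $(\cV_k(F),E)\cong(TC_E\cV_k(F),\bz)$.
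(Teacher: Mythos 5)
Your proof is correct, but it follows a genuinely different route from the paper. The paper first proves a formal statement (Theorem~\ref{thmGenToCone}) using the deformation theory with cohomology constraints of \cite{BW,BR} in the language of $\Linf$ pairs --- in particular, Theorem~\ref{thm1GDik} builds a universal matrix $d_{univ}$ over a completed polynomial ring and shows injectivity of $\pi$ forces a formal coordinate change that linearizes it --- and then passes from formal to \'etale coordinates via an Artin-approximation argument for towers of closed embeddings (Proposition~\ref{propTower}). You instead take the classical Kempf-style path: represent $Rp_*(\cE\otimes F)$ \'etale-locally by a minimal two-term complex $\cH_0\xrightarrow{\alpha}\cH_1$ with $\alpha(E)=0$, identify $d\alpha_E$ with the Serre-dual of $\pi_{E,F}$, read off surjectivity of $d\alpha_E$ from injectivity of $\pi_{E,F}$, and invoke the structure theorem for smooth morphisms to linearize $\alpha$ directly in the \'etale topology. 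This bypasses both the $\Linf$ formalism and Artin approximation, so it is more elementary; the trade-off is that your method relies on $Rp_*(\cE\otimes F)$ being a two-term complex, hence on $C$ being a curve, whereas the paper's framework (Propositions~\ref{rmkPair} and~\ref{propOurLePot}) is set up for smooth projective $X$ of arbitrary dimension and the formal statement Theorem~\ref{thmGenToCone} has interest on its own.

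Two small cautionary remarks. First, the step you label ``the main technical step'' --- that $d\alpha_E$, computed from the Atiyah class of $\cE$ through the construction of $[\cH_0\to\cH_1]$, is Serre-dual to $\pi_{E,F}$ --- is genuinely where the content lies; you should either prove it or cite a reference that does (for line bundles this is \cite{Ke} and \cite[IV, Prop.~4.2]{A+}; for twisted stable bundles see \cite{CT} and \cite{Po}). The paper obtains the same differential computation as Proposition~\ref{propOurLePot}(2), derived from Theorem~\ref{propTDef}. Second, your phrase ``an \'etale isomorphism of a neighborhood of $E$ in $U$ onto $\Mat_{l'\times l}\times Z$'' should be ``an \'etale \emph{morphism}'' $\phi\colon V\to\Mat_{l'\times l}\times\bA^m$ over $\Mat_{l'\times l}$, composed (after a translation in the $\bA^m$-factor so $\phi(E)=(\bz,0)$) with a linear isomorphism of $\Mat_{l'\times l}\times\bA^m$ with $T_E\cM_{n,d}$ carrying the first projection to $d\alpha_E$; this gives a single \'etale map $\psi\colon V\to T_E\cM_{n,d}$ with $\alpha=d\alpha_E\circ\psi$, and $\psi^{-1}\bigl((d\alpha_E)^{-1}(D_{l-k})\bigr)=\cV_k(F)\cap V$ for all $k$ simultaneously, as required for the cartesian tower in the statement. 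With these clarifications the argument is complete.
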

 
Only the first part seems new, for the description of the tangent cone in the second part see  \cite{Po}.
 The injectivity condition holds for example in the following cases:
  
 \begin{theorem}\label{thrmInj}
Assume that $C$ is generic among curves with same genus and
\begin{itemize}
\item $\rm{(}$\cite{Gie}$\rm{)}$   $F=\cO_C$, or
\item $\rm{(}$\cite{TiBtw}$\rm{)}$   $F$ is generic among vector bundles with same rank and degree.
\end{itemize}
Then the Petri map
$
\pi_{L,F}$
is injective for every $L\in \pic^d(C)$.
\end{theorem}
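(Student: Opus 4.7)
The plan is to prove both assertions by a degeneration-plus-semicontinuity argument: the locus in the relative moduli of triples (curve $C$, line bundle $L$, auxiliary bundle $F$) where $\pi_{L,F}$ fails to be injective is closed, so it suffices to exhibit a single point in the closure of the locus of smooth curves where $\pi_{L,F}$ is injective for every $L$ (and, in (ii), for a generic $F$). Concretely, one works with the relative Picard stack $\cP \to \cM_g$ over the moduli of curves of genus $g$, observes that the Petri map globalizes to a morphism of locally free sheaves on $\cP$ whose kernel has upper-semicontinuous rank, and reduces the theorem to showing that the closed bad locus does not dominate $\cM_g$.

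For (i), following Gieseker, one specializes $C$ to a stable nodal curve $C_0$ of compact type, typically a chain of components joined at transverse nodes. On such a $C_0$, limit line bundles and their sections are governed by combinatorial data (vanishing sequences at the nodes), and the Petri multiplication map decomposes accordingly along the dual graph. Injectivity of the limit Petri map is then verified componentwise: on each component the map reduces to a determinantal condition controlled by the vanishing orders, and compatibility at the nodes is handled by a dimension count which succeeds because the allowed vanishing sequences are sufficiently spread out.

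For (ii), Teixidor i Bigas upgrades this to the vector-bundle setting by also degenerating $F$ to a torsion-free sheaf on $C_0$ with generically chosen gluing data at each node. The injectivity analysis on $C_0$ then combines the linear-series limit for $L$ with a genericity argument for $F$: at each node, generic gluing ensures that local relations among sections of $L\otimes F$ and $L^\vee\otimes F^\vee\otimes\omega_C$ cannot cancel under multiplication into $\omega_C$. Semicontinuity applied to the extended moduli of triples $(C,L,F)$ then transports injectivity to a generic smooth $C$ and generic $F$.

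The main obstacle is the combinatorial bookkeeping on $C_0$: one must control simultaneously the vanishing sequences of both tensor factors of the Petri map at every node, and rule out accidental kernel elements assembled from sections concentrated on different components of the dual graph. This is where the technical heart of the Gieseker and Teixidor arguments lies, and where the genus-dependent constraints on the allowed degenerations become visible; the rest of the proof (the moduli setup and the semicontinuity transfer) is formal once the limit computation is in place.
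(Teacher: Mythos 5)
This theorem appears in the paper purely as a citation: the two bullets are attributed to \cite{Gie} and \cite{TiBtw} respectively, and the paper itself supplies no proof and relies on those references as black boxes. So there is no in-paper argument to compare against. Your sketch is a plausible high-level account of the degeneration strategy used in that literature (closedness and semicontinuity of the non-injectivity locus in a relative moduli of triples, plus exhibiting a good limit on a nodal curve of compact type), and your summary of the Teixidor i Bigas twisted case, with the auxiliary bundle also degenerating and generic gluing data at nodes, matches her approach in spirit.

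Two caveats worth flagging. First, the vocabulary of limit linear series and vanishing sequences that you invoke for case (i) is the Eisenbud--Harris framework, which postdates Gieseker's 1982 paper; Gieseker's original argument is a substantially more ad hoc degeneration and dimension count, and there is also Lazarsfeld's entirely different proof via K3 surfaces. Attributing the modern limit-linear-series proof to Gieseker conflates distinct arguments. Second, and more importantly, your proposal explicitly defers the ``combinatorial bookkeeping'' at the nodes, which is precisely the content of the theorem in both references; as written this is an outline of the strategy rather than a proof, and it would not suffice as a self-contained argument. That is acceptable here given that the paper itself only cites these results, but one should be clear that the hard step has been left as a pointer to the literature rather than carried out.
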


 To prove Theorem \ref{thmGenToConeEt} we prove first its formal analog, Theorem \ref{thmGenToCone}. Theorem \ref{thmGenToCone} is only a slightly improved version of
 \cite[Thm. 0.1]{Po} where   $A_\infty$-categories where used, the difference  being that here compatible isomorphisms of formal neighborhoods for the whole filtration of closed subschemes $\{\cV_k(F)\}_{k\ge 0}$ are claimed, although this might already follow from {\it loc. cit.} In any case, we prove Theorem \ref{thmGenToCone} using the deformation theory with cohomology constraints of \cite{BW} in its version in terms of $\Linf$ pairs \cite{BR}. The proof is  almost a tautology.  A second proof of Theorem \ref{thmGenToCone} via $\Linf$ pairs bearing similarity to \cite[Thm. 3.1]{Po} is delegated to \cite[\S 13]{BD} since it requires more $\Linf$ background. 
 
To pass from  formal  to   \'etale local coordinates, a version for pairs of Artin's algebraization theorem is used,  Proposition \ref{propTower}.

By  Theorem   \ref{thmGenToConeEt} the \'etale local models for Brill-Noether loci on curves at stable bundles with injective Petri maps are generic determinantal schemes. This has many consequences.
To state them it is convenient to assume  that
\be\label{eqAssu}
 l=h^0(C,E\otimes F)\le l'=h^1(C,E\otimes F).
\ee
By (\ref{eqChi}) this assumption is independent of $E$. This is for simplicity since one can always reduce to this case. If $F=\cO_C$, (\ref{eqAssu}) becomes
$
n(g-1)-d\ge 0.
$
If $E=L$ is a line bundle and $F=\cO_C$, this is equivalent to $d<g$. The   singularity theory terms  below are reviewed in \cite[\S 9]{BD}.

\begin{thrm}\label{thrmLCThrk}\label{thrmLCT} In the setup of Theorem \ref{thmGenToConeEt}, let $K=\bC$,
let   $E\in \cV_k(F)\subset\cM_{n,d}$ with $1\le k\le l$, satisfying (\ref{eqAssu}), and such that $\pi_{E,F}$ is injective.
Then the following hold in a   Zariski open neighborhood of $E$ in $\cM_{n,d}$:

\begin{enumerate}[(i)]

\item\label{qo} $\cV_k(F)$  is variety with at most rational singularities, it has dimension 
$$
\rho_{n,d,k}(F) := n^2(g-1)+1-k\bigl( k - n\deg(F)+\rank (F)(n(g-1)-d) \bigr),
$$
and  the singular locus of $\cV_k(F)$ is $\cV_{k+1}(F)$.





\item\label{qiii} If $k=1$ the local Bernstein-Sato polynomial at $E$ of the ideal defining  $\cV_1(F)$ in $\cM_{n,d}$ is
$$
\prod_{i=l'-l+1}^{l'} (s+i).
$$

\item\label{qix} The log canonical threshold of $(\cM_{n,d},\cV_k(F))$ at $E$ is $$
\min\left\{\frac{(l-i)(l'-i)}{l -k+1-i}\mid i=0,\ldots ,l-k\right\}.
$$

\item\label{qvi} Consider $f:Y\to \cM_{n,d}$ the composition of blowups of (strict transforms) of $\cV_l(F)$, $\cV_{l-1}(F)$, $\cV_{l-2}(F)$, $\ldots$, $\cV_1(F)$, in this order. Then:

\begin{itemize}
\item At each stage this is the blowup of a smooth center. 
\item The composition $f$ is a log resolution of $(\cM_{n,d},\cV_{k}(F))$. 

\item The pullback of the ideal sheaf defining $\cV_{k}(F)$ is $\cO_Y(-\sum_{i=0}^{l-k}(l-k+1-i)E_i)$, where $E_i$ is the (strict transform of the) divisor introduced by blowing up the (strict transform of) $\cV_{l-i}(F)$.
\end{itemize}

\item\label{qvii} The stratification of $\cV_k(F)$ given by $\cV_{t}(F)\setminus \cV_{t+1}(F)$ with $k\le t$ is a Whitney stratification, and the local Euler obstruction at $E$ of $\cV_k(F)$  is $\binom{l}{l-k}$.


\item\label{qiv} If $d=n(g-1-\deg(F)/\rank (F))$, equivalently $l=l'$,   the topological zeta function at $E$ of the pair $(\cM_{n,d},\cV_k(F))$ is $$\prod_{\al\in \Omega}\frac{1}{1-\al^{-1}s}$$
where
$$
\Omega\subset\left\{ -\frac{l^2}{l-k+1}, -\frac{(l-1)^2}{l-k}, -\frac{(l-2)^2}{l-k-1},\ldots, - k^2 \right\}.
$$

\item\label{qv} If $k=1$ and $d=n(g-1-\deg(F)/\rank (F))$, the  monodromy conjecture  holds  locally at $E$ for the generalized theta divisor $\cV_1(F)\subset\cM_{n,d}$, that is, the local Bernstein-Sato polynomial times the local topological zeta function is a polynomial in $s$.

\item\label{xi} If $l=l'$, the minimal discrepancies of $\cV_k(F)$ along $\cV_{k+1}(F)$ and, respectively, along a point $E'\in \cV_{k'}(F)\setminus \cV_{k'+1}(F)$ with $k\le k'\le l$ are:
$$
\mld(\cV_{k+1}(F);\cV_k(F)) = k+1,\quad\quad \quad\mld(E';\cV_k(F)) = l^2-kk'.
$$
\end{enumerate}
 \end{thrm}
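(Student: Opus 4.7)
The plan is to deduce each assertion from its counterpart for generic determinantal varieties via Theorem \ref{thmGenToConeEt}. \'Etale-locally at $E$, the pair $(\cM_{n,d},\cV_k(F))$ is isomorphic to $(\HH^1(C,E\otimes E^\vee),TC_E\cV_k(F))$, and $TC_E\cV_k(F)$ is cut out of the ambient vector space by the $(l-k+1)$-minors of an $l'\times l$ matrix $M$ whose $ll'$ entries are linear forms on $\HH^1(C,E\otimes E^\vee)$. Injectivity of $\pi_{E,F}$, combined with Serre duality, implies that these $ll'$ linear forms are linearly independent. Completing them to a linear system of coordinates on $\HH^1(C,E\otimes E^\vee)$, I identify $(\HH^1(C,E\otimes E^\vee),TC_E\cV_k(F))$ with $(\bA^{ll'}\times\bA^{N},D_k\times\bA^{N})$, where $N=n^2(g-1)+1-ll'$ and $D_k\subset\bA^{ll'}$ is the universal generic determinantal variety of $l'\times l$ matrices of rank less than $l-k+1$. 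Every invariant appearing in (i)--(viii) is \'etale local and behaves trivially under product with a smooth affine factor, so the theorem reduces to the corresponding determinantal statements about $(\bA^{ll'},D_k)$ at the origin, with the rank filtration $D_0\supset D_1\supset\cdots\supset D_l$ corresponding to the filtration $\{\cV_j(F)\}$.

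For each part I would then invoke the corresponding determinantal result. Part (i) follows from the classical results of Hochster--Eagon and De Concini--Eisenbud--Procesi on normality, Cohen--Macaulayness, rational singularities, dimension, and the singular locus of $D_k$. Part (ii) is the classical $b$-function of the determinant of the generic $l'\times l$ matrix with $l\le l'$ (Sato--Kashiwara), namely $\prod_{i=l'-l+1}^{l'}(s+i)$. Part (iii) is the log canonical threshold formula due to Johnson and Docampo. Part (iv) is the standard iterated-blowup log resolution of $D_k$ via successive blowups of its smaller rank strata, together with the stated exceptional multiplicities, well documented in the determinantal literature. Part (v) follows because the rank stratification of $D_k$ is Whitney regular and the local Euler obstruction at the origin of $D_k$ equals $\binom{l}{l-k}$ by standard determinantal computations.

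For part (vi), in the square case $l=l'$, I would compute the topological zeta function directly from the log resolution of (iv): the candidate pole contributed by the exceptional divisor $E_i$ is $-N_i/\nu_i$, with multiplicity $\nu_i=l-k+1-i$ in the pulled-back ideal and log discrepancy $N_i=(l-i)^2$ (the codimension of $\cV_{l-i}(F)$ when $l=l'$); this produces the displayed set for $\Omega$, with possible cancellations accounting for the inclusion. Part (vii) then combines (ii) and (vi): for $k=1$ and $l=l'$, the candidate zeta-function poles lie in $\{-(l-i)^2/(l-i)\mid i=0,\ldots,l-1\}=\{-1,-2,\ldots,-l\}$, which are precisely the roots of the $b$-function $\prod_{i=1}^l(s+i)$ from (ii), so their product is polynomial in $s$. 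Finally, part (viii) can be extracted from the log resolution of (iv): the relative canonical divisor and the codimensions of the rank strata yield $\mld(\cV_{k+1}(F);\cV_k(F))=k+1$ and $\mld(E';\cV_k(F))=l^2-kk'$, matching the minimal log discrepancies of generic determinantal singularities known in the literature.

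The principal obstacle is organizational rather than conceptual: locating the sharpest available reference for each determinantal invariant, and checking that each is stable under \'etale localization and under product with a smooth affine factor. A secondary issue is the passage from pointwise \'etale-local statements to claims on a Zariski open neighborhood of $E$ in $\cM_{n,d}$: since each listed property is either open or governed by the nested closed subschemes $\cV_j(F)$, the assertions at $E$ propagate to such a neighborhood.
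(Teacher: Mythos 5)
Your proposal is correct and follows essentially the same route as the paper: transfer each invariant through the \'etale-local product decomposition supplied by Theorem \ref{thmGenToConeEt} to the corresponding statement about generic determinantal varieties at the origin (the paper's Theorem \ref{thrmGen}, which cites the same sources you list --- Kempf, Johnson/Docampo, L\"orincz--Raicu--Walther--Weyman, Starr, Gaffney et al., Mallory). Two small points of difference. First, for part (\ref{qvi}) the paper does not just assert that the iterated blowup is an \'etale-local invariant; it builds the three parallel towers of blowups over $\cM$, $X$, $M$ as a tower of cartesian squares with \'etale horizontal maps and then uses \'etale base change to transfer smoothness of centers, exceptional multiplicities, and the snc condition step by step --- you should make a similar argument explicit rather than appealing to ``behaves trivially under \'etale localization.'' Second, for part (\ref{qiv}) you propose to compute the topological zeta function directly from the resolution; the paper instead cites Docampo's determinantal zeta function computation, which is safer because your direct route would still need to justify the precise log discrepancies $N_i$ along the tower of strict-transform blowups and to control cancellations among candidate poles (which is also why the statement has $\Omega\subset\{\ldots\}$ rather than equality). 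Finally, the paper handles the passage from a pointed \'etale germ to a Zariski open neighborhood somewhat more carefully --- shrinking so that $\cM$, $\cV_k$, $X$, $X_k$ are connected, then using that the determinantal models are reduced and irreducible to conclude the same for $\cV_k$ --- whereas your last paragraph gestures at this without the connectedness step.
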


Part   (\ref{qo}) is a more general version of  results  due to  \cite{Ke, AC, A+, TiBtw, CT}.

Part (\ref{qiii}) implies that the local minimal exponent of $\cV_1(F)$ at $E$ is 2, which answers positively a more general form of a question raised in an early version of \cite{SY2}.

Part (\ref{qix}) recovers a result of   \cite{Zhu} for $n=1$ and $F=\cO_C$, and extends it to a more general form. The proof from \cite{Zhu} is long since it does not make use of something like Theorem \ref{thrmWrd}. Our proof explains what was remarked as a curiosity in \cite{Zhu} that the log canonical threshold   equals that of a  determinantal variety.

Parts (\ref{qvi}), (\ref{qvii}) answer positively  \cite[Problem 9.2]{SY}, and in fact they answer a more general version of this problem.  Part (\ref{qvi}) also recovers a result of \cite[Thm. 3.3]{Mul} saying that for $n=1$, $F=\cO_C$, and $d=g-2$, this blowup process is an embedded resolution (without checking the simple normal crossings condition) of $(\pic^{g-2}(C),W^0_{g-2})$.

See Part I of the survey \cite{BD} for other consequences regarding multiplicities, multiplier ideals, Hodge ideals, jet schemes, and local cohomology of Brill-Noether loci.

\subs{\bf Organization.}  
In Section \ref{secDFT} we review the deformation theory with cohomology constraints of \cite{BW, BR} in terms of $\Linf$ pairs. Here we include only a  black box. A technical review of   
$\Linf$ algebras and $\Linf$ modules can be found in the survey article \cite{BD}. Section \ref{secDFT} ends with the proof of Theorem \ref{thmGenToCone}, the formal analog of Theorem \ref{thmGenToConeEt}.

In Section \ref{secApplBN} we prove Theorem \ref{thmGenToConeEt}. In Section \ref{secDet} we collect some known facts about the singularities of spaces of generic matrices. In Section \ref{Proof} we prove Theorem \ref{thrmLCThrk}.

\subs{\bf Acknowledgement.} I thank  M. Coppens, C. Chiu,  R. Docampo, A.K. Doan, M. Rubi\'o, C. Schnell, R. Yang, N. Zhang for discussions.
This work was partially supported by the Methusalem grant METH/21/03 and the grants G097819N, G0B3123N from FWO.

\subs{\bf Notation.}\label{subNot}  A {\it  variety} $X$ over a field $K$ of characteristic zero is a geometrically irreducible, reduced, separated scheme of finite type over $K$. 
By convention,  the ideal generated by the $k$-minors of an $l'\times l$ matrix of linear forms, with $k\in\bZ$ and $l,l'>0$, is the zero ideal if $k>\min\{l,l'\}$, and is the ideal $\langle 1\rangle$ if $k\le 0$.

\section{Deformations with cohomology constraints}\label{secDFT}

 Deformation problems with cohomology constraints over a field $K$ of characteristic zero are controlled by dgl pairs, or better, $L_\infty$ pairs, according to a generalization of Deligne's principle by  \cite{BW, BR, BD}.
  A pair means an algebra together with a module.  The main feature of the theory is that only one  pair is necessary to control the local structure at a fixed point $L$ of the pairs $(\cM,\cV_k^i)$ for all $k,i \in\bZ$, where $\cM$ is a fixed moduli scheme of objects with a cohomology theory and $\cV_k^i=\{L\in\cM\mid h^i(L)\ge k\}$ are the cohomology jump  subschemes. Equivalent  pairs describe the same $(\cM,\{\cV_k^i\}_{i,k})$ locally at $L$. 
The moduli space $\cM$ does not have to exist: in general one has a deformation functor for $L$ and deformation subfunctors for all $i, k$ that play the role of the formal completions $\widehat{\cM}_L$ and $\widehat{(\cV^i_k)}_L$.  
   

\subs{\bf Dgl pairs.}
Classical deformation theory studies the local structure of $\cM$ at  $L$ by  attaching a differential graded Lie algebra (dgla) $C$ such that $$
\widehat{\cM}_L\simeq \Def ({C}) :\Art \ra \Set
$$ 
as functors from the category $\Art$ of local Artinian finite type $K$-algebras to the category of sets. 
 The deformation functor  $\Def(C)$ is defined by associating to every $A$ in $\Art$ with maximal ideal $\mathfrak{m}_A$ the set of Maurer-Cartan elements of $C\otimes \mm_A$ modulo the gauge action
$$
\Def(C; A) :=\{\omega\in C^1\otimes_K \mathfrak{m}_A\mid d_C\omega+\frac{1}{2}[\omega,\omega]_C=0\}/ (C^0\otimes_K\mathfrak{m}_A),
$$
where $d_C$ is the differential of $C$ extended by identity on $A$, and $[.\,,.]_C$ is the Lie bracket of $C$ extended by the usual multiplication on $A$. Two quasi-isomorphic dgla's have isomorphic deformation functors \cite{GM}.

To the object $L$ one also attaches a (left) dgl  module $M$ over $C$, that is,  a dgl pair  $(C,M)$ in the terminology from \cite{BW}.   When $C, M$ are graded by $\bN$, and as cochain complexes they are bounded-above and have finite-dimensional cohomology, one has well-defined cohomology jump deformation subfunctors $\Def^i_k(C,M)$ of $\Def(C)$ for   $i,k\in\bZ$ such that
$$
\widehat{(\cV^i_k)}_L\simeq \Def^i_k(C,M),
$$
and any two equivalent dgl pairs give the same cohomology jump deformation subfunctors, by \cite[\S 3]{BW}. In this case we say that $(C,M)$  {\it controls} the deformations of $L$ with cohomology constraints. The deformation subfunctors send $A\in\Art$ to
\begin{align*}
\Def^i_k(C,M;A)  := \{\omega\in C^1\otimes_K & \mathfrak{m}_A\mid  d_C\omega+\frac{1}{2}[\omega,\omega]_C=0\text{ and }\\
& J^i_k(M\otimes_KA, d_M+\omega)=0 \}/ (C^0\otimes_K\mathfrak{m}_A),
\end{align*}
where $d_M$ is the differential of $M$ extended by identity on $A$ to $M\otimes_KA$, and the cohomology jump ideals $J^i_k\subset A$ of the  complex $(M\otimes_KA, d_M+\omega)$ of $A$   modules are defined as follows.

\begin{defn}\label{defCJI} 
 Let $R$ be a noetherian commutative ring and  $N$ a complex of $R$-modules, bounded above, with finitely generated cohomology. There always exists a bounded above complex $F$ of finitely generated free $R$-modules and a quasi-isomorphism of complexes $F \xrightarrow{\sim} N$. The {\it cohomology jump ideals} of $N$ are the ideals in $R$ defined as 
$$
J^i_k(N) = I_{{\rank}(F^i)-k+1}(d^{i-1} \oplus d^i),
$$
where $d^i: F^i \rightarrow F^{i+1}$ are the differentials of $F$, and $I_r$ is the ideal generated by the  $r\times r$ minors. The cohomology jump ideals do not depend on the choice of the free resolution, by \cite[\S 2]{BW}.
\end{defn}

\begin{prop}\label{rmkPair} 
Let $E, F$ be two vector bundles over a smooth projective variety $X$ over an algebraically closed field $K$. Assume that $E$ is stable with respect to a fixed polarization. Then the deformations of $E$ with cohomology constraints $h^i(X,E\otimes F)\ge k$ are controlled by the dgl pair $( R\Gamma(X, \enmo  (E)), R\Gamma(X,E\otimes F))$.
\end{prop}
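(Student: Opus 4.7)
The plan is to pick explicit dg models for the two derived global section objects, equip them with the natural dgla and dgl-module structures, and then verify the two required identifications: Maurer--Cartan elements modulo gauge correspond to deformations of $E$, and the twisted complex $(M\otimes_K A, d_M+\omega)$ computes the cohomology of the deformed bundle, so that the cohomology jump ideals of \cite{BW} cut out exactly the cohomology-constraint subfunctors.

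First, fix an affine open cover $\mathfrak{U}$ of $X$ and set $C := \check{C}^\bullet(\mathfrak{U},\enmo(E))$ with its standard dgla structure (\v{C}ech differential together with the graded Lie bracket induced by cup product and composition in $\enmo(E)$), and $M := \check{C}^\bullet(\mathfrak{U}, E\otimes F)$ with the dgl module structure over $C$ coming from the natural action $\enmo(E)\otimes (E\otimes F)\to E\otimes F$. These are quasi-isomorphic to $R\Gamma(X,\enmo(E))$ and $R\Gamma(X, E\otimes F)$, and have finite-dimensional cohomology since $X$ is proper; by \cite{GM} and \cite[\S 3]{BW}, the deformation functor and its cohomology jump subfunctors are invariant under dgl-pair quasi-isomorphism, so it suffices to prove the claim for this explicit $(C,M)$. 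By the classical theorem of Kodaira--Spencer, Kuranishi, and Goldman--Millson, for a simple bundle---which holds here since $E$ is stable---the \v{C}ech dgla $C$ controls deformations of $E$, giving $\Def(C)\simeq \widehat{\cM}_E$: explicitly, $\omega\in C^1\otimes \mm_A$ twists the transition cocycle of $E$ by $\id + \omega$ to produce a locally free sheaf $E_\omega$ on $X\times \spec(A)$ flat over $A$ with central fibre $E$, and gauge equivalence by $C^0\otimes\mm_A$ corresponds to isomorphism of deformations.

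The key step is to identify the twisted complex $(M\otimes_K A, d_M+\omega)$ with the \v{C}ech complex of $E_\omega\otimes F_A$ on $X\times \spec(A)$, where $F_A$ denotes the pullback of $F$. Unwinding the module action shows that $d_M+\omega$ is precisely the \v{C}ech differential built from the twisted transition maps defining $E_\omega\otimes F_A$; since $\mathfrak{U}\times\spec(A)$ is still an affine cover and $A$ is flat over $K$, this \v{C}ech complex computes $R\Gamma(X\times \spec(A), E_\omega\otimes F_A)$. Combined with the interpretation of cohomology jump ideals via minors of differentials (Definition \ref{defCJI}) and with cohomology and base change, vanishing of $J^i_k$ in $A$ is equivalent to $h^i$ jumping by at least $k$ along the deformation, so $\Def^i_k(C,M)$ pro-represents $\widehat{(\cV^i_k)}_E$.

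The main obstacle is the twisted \v{C}ech identification just described: one must check carefully that the module-level MC twist of $d_M$ encodes precisely the new gluing cocycle of $E_\omega\otimes F_A$. Once verified, everything else is formal bookkeeping inside the black-box framework recalled in this section. A secondary technical nuisance---handled by quasi-isomorphism invariance, automatic since $X$ is proper---is replacing the \v{C}ech complex, whose terms are not finite-dimensional over $K$, by a quasi-isomorphic bounded complex of finite-dimensional vector spaces in order to apply the minor-ideal definition of cohomology jump ideals.
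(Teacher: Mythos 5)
Your proposal takes the \v{C}ech route, which is also the route the paper takes for arbitrary $K$ (and implicitly for $K=\bC$ via comparison with Dolbeault), and the overall shape of your argument is correct: twist the \v{C}ech cocycle of $E$ by a Maurer--Cartan element to produce $E_\omega$, identify the twisted complex $(M\otimes_K A,\,d_M+\omega)$ with a \v{C}ech model for $R\Gamma(X\times\spec A,\,E_\omega\otimes F_A)$, and then compare cohomology jump ideals. So the plan matches the paper's.

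What is missing, and what the paper is careful to flag, is the role of the \emph{quasi-universal family} (\cite[Prop.~4.6.2]{HL}). Two places in your sketch silently rely on it. First, the identification $\Def(C)\simeq\widehat{\cM}_E$: Kodaira--Spencer/Kuranishi arguments are a $K=\bC$ (analytic) device; over an arbitrary algebraically closed field one needs the quasi-universal family on $X\times\cM$ to prorepresent the deformation functor of the stable bundle $E$ by $\widehat{\cM}_E$. This is exactly the replacement the paper makes for the ``local Kuranishi family'' in adapting \cite[Thm.~6.4]{BW}. Second, the comparison of scheme structures: the subscheme $\cV^i_k\subset\cM$ has its ``natural'' scheme structure defined by the determinantal (Fitting-type) ideals of $R\pi_{\cM*}(\cE\otimes p^*F)$ for a quasi-universal $\cE$; your phrase ``vanishing of $J^i_k$ in $A$ is equivalent to $h^i$ jumping by at least $k$'' conflates the closed-fibre condition (vanishing mod $\mm_A$) with the schematic condition $J^i_k=0$ in $A$, which is what $\Def^i_k$ actually imposes. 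The correct statement is that $J^i_k(M\otimes A,\,d_M+\omega)$ equals the pullback along $\spec A\to\cM$ of the ideal defining $\cV^i_k$, via base change for cohomology jump ideals of the (quasi-)universal complex; this is what makes $\Def^i_k(C,M)\simeq\widehat{(\cV^i_k)}_E$ an isomorphism of \emph{functors} rather than a set-level statement. Adding the quasi-universal family explicitly (and restricting the Kodaira--Spencer/Kuranishi citations to the case $K=\bC$) would close the gap; the rest of your argument is sound and is essentially the paper's.
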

\begin{rmk}
These complexes of derived global sections are not uniquely defined, so let us specify what we mean. First note that independently of the model chosen for these complexes,  the Lie bracket is induced by the graded commutator of the composition in the endomorphisms vector bundle $\enmo  (E)$, and the action of $\enmo(E)$ on $E\otimes F$ lifts to the model. In each case below, it can be checked that the model is a dgl pair. 

When $K=\bC$ and one takes as model the pair of the Dolbeault resolutions of $\enmo(E)$ and $E\otimes F$, the claim is proven in \cite[Theorem 6.4]{BW}. For arbitrary $K$, it is well-known that  the deformations of $E$ are also controlled by a \v{C}ech complex $\Check{C}(\mathcal{U},\enmo (E))$, and that there is an equivalence between this dgla and the Dolbeault dgla in the complex case, see \cite[Rmk. 5.1]{FMM}. By a similar argument one has in the complex case the equivalence between \v{C}ech and Dolbeault pairs. To show when $K\neq \bC$ that the \v{C}ech pair $( \Check{C}(\mathcal{U},\enmo (E)),  \Check{C}(\mathcal{U}, E \otimes F) )$ controls the deformations of $E$ with constraints as above, one adapts the proof of \cite[Theorem 6.4]{BW} where the  ``local Kuranishi family of vector bundles"  is now provided by ``quasi-universal family" of \cite[Prop. 4.6.2]{HL}. 

One also obtains that  the scheme structure of $\widehat{\cV_k(F)}_E$ described by $\Def^0_k$ of this dgl pair agrees with the classical one in the case $X$ is a curve, since the local quasi-universal family is also used to define the latter, cf. \cite{CT}.

\end{rmk}

\subs{\bf $\Linf$ pairs.} A  more efficient theory has been developed in \cite{BR} by passing from dgl pairs to $\Linf$ pairs. The category of dgla's is a subcategory of  the category of  $\Linf$ algebras, and the category of dgl pairs is a subcategory of the category of $\Linf$ pairs, that is, pairs consisting of an $\Linf$ algebra together with an $\Linf$ module, with morphisms appropriately defined.
We refer to \cite[\S 12]{BD} for  details and definitions. Here we only recall that an {\it $\Linf$ algebra} is a graded vector space $ C$ together with a collection of graded anti-symmetric multilinear maps
$$
l_n: C^{\otimes n}\ra  C
$$
of degree $2-n$ for every $n\ge 1$, satisfying a generalized Jacobi identity. The map $l_1$ is a differential, so that $(C,l_1)$ is a complex. Dgla's are   $\Linf$ algebras with $l_n=0$ for $n\ge 3$, in which case $l_1$ is the differential and $l_2$ is the Lie bracket. An {\it $\Linf$ module} over $ C$ is a graded vector space $ M$ together with a collection of graded linear maps
$$
m_n: C^{\otimes n-1}\otimes  M\ra  M
$$
of degree $2-n$ for every $n\ge 1$, satisfying a certain compatibility with the maps $l_n$. The map $m_1$ is a differential, so that $(M,m_1)$ is a complex. 

There are two other equivalent definitions of the $\Linf$ pair structure: one in terms of graded symmetric multilinear maps, and one in terms of codifferentials on a comodule over a coalgebra \cite[12.17]{BD}. The latter is the most conceptual definition. The notion of {\it weak equivalence} between $\Linf$ pairs is recalled in  \cite[\S 12.45]{BD}.

The homotopy transfer theorem  \cite[Thm. 12.46]{BD} says that for a dgl pair $(C,M)$ the cohomology graded vector spaces $HC$ and $HM$ can be endowed with an $\Linf$ algebra structure $l_*$ and, respectively, an $\Linf$ module structure $m_*$, such that: $l_1=0$, $m_1=0$, $l_2$ and $m_2$ are induced from  the Lie bracket and the dgl module structure, and the dgl pair $(C,M)$ is weakly equivalent as an $\Linf$ pair with $(HC,HM)$. The transfer depends on some choices. Regarding the dgla $(C,d_C,\left[\_\;,\_\right]_C)$ one chooses a {homotopy retract}
 \be\label{eqHtr}
\xymatrix{
C \ar@(lu,ld)_h \ar@<.5ex>[r]^p & \ar@<.5ex>[l]^\iota HC
}
\ee that is, $p:(C,d_C)\ra(HC,0)$ and $\iota:(HC,0)\ra (C,d_C)$ are morphisms of cochain complexes,  $\iota$ is a quasi-isomorphism, and  $h:C\ra C[-1]$ is a graded linear map such that $\id_C-\iota p=d_Ch+hd_C$. Then the $\Linf$ multiplications maps $l_n$ on $HC$ are determined from the homotopy retract using rooted binary trees, see \cite[Thm. 12.31]{BD}. There is also a choice to bemade involving $M$. We will not need the explicit formulas for $l_n$ and $m_n$.

There is a well-defined deformation functor $\Def (HC)$ for the $\Linf$ algebra $HC$ such that to  every $A$ in $\Art$ one attaches
 \be\label{eqLMC}
\Def(HC; A):=\Bigl\{\omega\in H^1C\otimes_K \mathfrak{m}_A\mid \sum_{n\ge 2}\frac{1}{n!}l_n^A(\omega^{\otimes n})=0\Bigr\}/_\sim
\ee
where $l_n^A=l_n\otimes\id_A$ and $\sim$ is the homotopy equivalence relation \cite[Def. 12.40]{BD}. Homotopy transfer \cite[Thm. 12.31]{BD} implies that the existence of the second isomorphism of functors
\be\label{eqMOD}
\widehat{\cM}_L\simeq \Def (C)\simeq \Def (HC)
\ee
This is due to Fukaya, Kontsevich, Soibelman,
Manetti, etc.,  \cite[Thm. 12.42]{BD}.

It was shown in \cite{BR} that, under the assumption that the cochain complex $M$ is bounded above, there are well-defined subfunctors $\Def^i_k(HC,HM)$ of $\Def(HC)$ such that
\begin{equation}\label{eqLJI}
\begin{split}
\Def^i_k(HC,HM;A):=&\left\{\omega\in  H^1C\otimes_K \mathfrak{m}_A\mid \sum_{n\ge 2}\frac{1}{n!}l_n^A(\omega^{\otimes n})=0\text{ and }\right.\\
&\left.J^i_k\biggl(HM, \sum_{n\ge 1}\frac{1}{n!}m_{n+1}^A(\omega^{\otimes n}\otimes \_)\biggr)=0 \right\}/_\sim
\end{split}
\end{equation}
with the cohomology jump ideals $J^i_k\subset A $ defined as above and $m_n^A:=m_n\otimes\id_A$,  \cite[Def. 12.50]{BD}. A weak equivalence of  $\Linf$ pairs induces an isomorphism of deformation functors restricting to isomorphisms of the cohomology jump deformation subfunctors, see \cite[Thm. 12.53]{BD}. This implies the existence of the second   isomorphism of functors
\be\label{eqTISOM}
\widehat{(\cV^i_k)}_L\simeq \Def^i_k(C,M)\simeq \Def^i_k(HC,HM),
\ee
see \cite[Thm. 12.53]{BD}. A basic result is the determination of the tangent spaces to these functors. Recall that the  tangent space to a deformation functor $F$ is $TF:=F(K[\eps]/(\eps^2))$.

\begin{theorem}{{\rm{(}}\cite[Thm 1.7]{BR}{\rm{)}}}
\label{propTDef}
Let $(C,M)$ be a dgl pair or, more generally, an $\Linf$ pair, over a field of characteristic zero. Assume that $C, M$ are $\bN$-graded and that $M$ is bounded above as a cochain complex. Let $h_i=\dim H^iM$. The  tangent spaces to the functors 
$$\Def^i_0(C,M)=\Def(C)\supset\ldots\supset\Def^i_k(C,M)\supset\ldots \supset  \Def^i_{h_i+1}(C,M)=\emptyset$$ 
are: the full  tangent space $T\Def(C)=H^1C$ if $k<h_i$; empty if $k>h_i$; and if $k=h_i$, equal to the kernel of the linear map
$$
H^1C\ra \bigoplus_{j=i-1,i}\Hom(H^{j}M,H^{j+1}M)
$$
induced from the $\Linf$ module multiplication maps
$
H^1C\otimes H^jM\ra H^{j+1}M.
$
\end{theorem}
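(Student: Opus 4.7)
The plan is to evaluate the functors at the dual numbers $A=K[\epsilon]/(\epsilon^2)$ after first replacing $(C,M)$ by a minimal $\Linf$ model; the nilpotency $\epsilon^2=0$ will then collapse all higher $\Linf$ operations and leave only the linear map induced by $m_2$. Concretely, by the homotopy transfer theorem \cite[Thm. 12.46]{BD} one chooses a retract as in (\ref{eqHtr}) and replaces $(C,M)$ by the weakly equivalent minimal $\Linf$ pair $(HC,HM)$ in which $l_1=0$ and $m_1=0$; the induced operation $m_2\colon H^1C\otimes H^jM\to H^{j+1}M$ is precisely the map appearing in the target of the statement. Since weak equivalence of $\Linf$ pairs induces isomorphisms of the cohomology jump deformation subfunctors by \cite[Thm. 12.53]{BD}, it suffices to compute $T\Def^i_k = \Def^i_k(HC,HM;A)$.

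Writing a first-order element as $\omega=v\epsilon$ with $v\in H^1 C$, every bracket $l_n^A(\omega^{\otimes n})$ in (\ref{eqLMC}) with $n\ge 2$, and every higher term $m_{n+1}^A(\omega^{\otimes n}\otimes\_)$ with $n\ge 2$ in (\ref{eqLJI}), carries a factor $\epsilon^n=0$. Hence the generalized Maurer--Cartan condition is automatic, the $\Linf$ homotopy equivalence relation is trivial, and the twisted differential on $HM\otimes_K A$ reduces to the single term $\epsilon\cdot m_2(v,\_)$. Since $HM\otimes_K A$ is already a bounded complex of finitely generated free $A$-modules, it serves as its own free resolution, and Definition \ref{defCJI} gives
\[
J^i_k \;=\; I_{h_i-k+1}\!\left(d^{i-1}\oplus d^i\right), \qquad d^j \;=\; \epsilon\cdot m_2(v,\_)\colon H^j M\otimes A\to H^{j+1}M\otimes A.
\]

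A short case analysis on the minor size then finishes the proof. If $h_i-k+1\ge 2$, i.e.\ $k<h_i$, every such minor is divisible by $\epsilon^2=0$, so $J^i_k=0$ identically and $T\Def^i_k$ equals the full space $H^1 C$; if $h_i-k+1=1$, i.e.\ $k=h_i$, the ideal is generated by the entries of $\epsilon\cdot m_2(v,\_)$ on $H^{i-1}M\oplus H^iM$, and its vanishing is exactly the asserted kernel condition; if $h_i-k+1\le 0$, i.e.\ $k>h_i$, the convention from \ref{subNot} forces $J^i_k=\langle 1\rangle$, which is nonzero in $A$, so $\Def^i_k$ is empty. The only non-routine ingredient is the homotopy transfer theorem, which I would invoke as a black box to guarantee that $(HC,HM)$ carries $\Linf$ operations whose $l_2$ and $m_2$ match the cohomological pairings named in the statement; everything else is dictated by the nilpotency of $\epsilon$.
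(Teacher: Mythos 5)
The paper does not prove this statement itself---it is cited directly from \cite[Thm.~1.7]{BR}---so there is no in-paper proof to compare against. Your reconstruction is correct and is the expected argument: transfer to the minimal $\Linf$ model $(HC,HM)$ via \cite[Thms.~12.46, 12.53]{BD}, evaluate at $K[\epsilon]/(\epsilon^2)$ so that nilpotency of $\epsilon$ kills every operation of arity $\ge 2$ in both the Maurer--Cartan equation and the twisted differential, observe that $HM\otimes_K A$ is its own free resolution so Definition \ref{defCJI} applies directly, and then the size $h_i-k+1$ of the required minor (together with the convention of \ref{subNot} for out-of-range sizes) yields the three cases. The one step worth making fully explicit is why the homotopy equivalence relation is trivial over the dual numbers: with $l_1=0$ on the minimal model and $\mathfrak{m}_A^2=0$, every nontrivial homotopy term involves either a differential or a product of at least two elements of $\mathfrak{m}_A$, so the identification of first-order Maurer--Cartan elements with $H^1C$ is genuinely set-theoretic, not merely up to equivalence.
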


Applying Proposition \ref{rmkPair} and  homotopy transfer \cite[Thm. 12.46]{BD} one has:

\begin{prop}\label{propOurLePot}
Let $E, F$ be two vector bundles over a smooth projective variety $X$ over an algebraically closed field $K$. Assume that $E$ is stable with respect to a fixed polarization. Then:

\begin{enumerate}[(1)]

\item The deformations of $E$ with cohomology constraints $h^i(E\otimes F)\ge k$ are controlled by the $\Linf$ pair $(\HH^\ubul(X,\enmo(E)), \HH^\ubul(X,E\otimes F))$.

\item If $\cM$ denotes the moduli space of stable vector bundles on $X$ of same Hilbert polynomial as $E$,  $\cV^i_k=\{E'\in\cM\mid h^i(E\otimes F)\ge k\}$ denote the cohomology jump loci endowed with the natural closed subscheme structure, and $h^i=h^i(E\otimes F)$, then the Zariski tangent spaces at $E$ to
$$\cV^i_0=\cM\supset\ldots\supset\cV^i_k\supset\ldots \supset  \cV^i_{h_i+1} (=\emptyset \text{ around } E)$$ 
are: the full Zariski tangent space $T_E\cM=\HH^1(X,\enmo(E))$ if $k<h_i$; empty if $k>h_i$; and if $k=h_i$, equal to the kernel of the linear map
$$
\HH^1(X,\enmo(E))\ra \bigoplus_{j=i-1,i}\Hom(\HH^{j}(X,E\otimes F),\HH^{j+1}(X,E\otimes F))
$$
induced from the natural multiplication maps
$
\HH^1(X,\enmo(E))\otimes \HH^{j}(X,E\otimes F)\ra \HH^{j+1}(X,E\otimes F).
$
\end{enumerate}

\end{prop}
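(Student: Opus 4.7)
The plan is to combine Proposition \ref{rmkPair}, the homotopy transfer theorem, and Theorem \ref{propTDef}. Part (1) is obtained by transferring a dgl model to its cohomology, and part (2) follows by identifying Zariski tangent spaces with the tangent spaces of the corresponding deformation functors and applying Theorem \ref{propTDef} to the transferred $\Linf$ pair.

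First, for part (1), Proposition \ref{rmkPair} supplies the dgl pair $(R\Gamma(X,\enmo(E)), R\Gamma(X,E\otimes F))$---realized for instance by \v{C}ech resolutions---that controls the deformations of $E$ with cohomology constraints $h^i(E\otimes F)\ge k$. The homotopy transfer theorem \cite[Thm. 12.46]{BD} endows the cohomology pair $(\HH^\ubul(X,\enmo(E)), \HH^\ubul(X,E\otimes F))$ with an $\Linf$ pair structure weakly equivalent, in the sense of \cite[\S 12.45]{BD}, to the dgl pair. Since a weak equivalence of $\Linf$ pairs induces an isomorphism of deformation functors that restricts to isomorphisms of all cohomology jump subfunctors, as recorded in (\ref{eqTISOM}), the transferred $\Linf$ pair on the cohomology also controls the deformations of $E$ with cohomology constraints.

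Next, for part (2), combining part (1) with (\ref{eqMOD}) and (\ref{eqTISOM}) yields
$$\widehat{\cM}_E \simeq \Def\bigl(\HH^\ubul(X,\enmo(E))\bigr), \qquad \widehat{(\cV^i_k)}_E \simeq \Def^i_k\bigl(\HH^\ubul(X,\enmo(E)),\HH^\ubul(X,E\otimes F)\bigr).$$
Since the Zariski tangent space at $E$ of a scheme coincides with the tangent space of its formal completion at $E$, which in turn equals the tangent space of the corresponding deformation functor, applying Theorem \ref{propTDef} to the $\Linf$ pair from part (1) yields the description of the Zariski tangent spaces. The one point that needs to be verified is that the linear map in Theorem \ref{propTDef}, arising from the $\Linf$ module multiplication $m_2$ on cohomology, agrees with the natural multiplication $\HH^1(X,\enmo(E))\otimes \HH^j(X,E\otimes F)\to \HH^{j+1}(X,E\otimes F)$; this is immediate from the homotopy transfer formulas since on cohomology $m_2$ is induced from the dgl module action of $\enmo(E)$ on $E\otimes F$ in the \v{C}ech or Dolbeault model, which realizes the usual cup-product/multiplication. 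The only remaining bookkeeping obstacle, and the main thing to watch, is matching the scheme structure of $\cV^i_k$ used in the statement with the one described by $\Def^i_k$, which is precisely what is addressed in the closing paragraph accompanying Proposition \ref{rmkPair} via the quasi-universal family.
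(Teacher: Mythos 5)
Your proposal is correct and follows essentially the same route the paper takes: Proposition \ref{rmkPair} supplies the controlling dgl pair, homotopy transfer (\cite[Thm.\ 12.46]{BD}) replaces it with the weakly equivalent $\Linf$ pair on cohomology, and Theorem \ref{propTDef} then yields the tangent space description. The extra care you take in checking that the transferred $m_2$ recovers the cup-product multiplication and that the scheme structure on $\cV^i_k$ matches the one seen by $\Def^i_k$ is exactly the bookkeeping the paper leaves implicit (the latter is noted in the remark following Proposition \ref{rmkPair}).
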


The second part is classical for Brill-Noether loci of line bundles when $X$ is a curve, cf. \cite[IV, Prop. 4.2]{A+}.

Until now we used $(C,M)$ to denote a dgl or $\Linf$ pair as in \cite{BW,BR}. From now we find it more suggestive to denote by $(M,V)$ such a pair controlling the local structure of $(\cM,\{\cV^i_k\}_{i,k})$.
Next result  will be used to prove Theorem \ref{thmGenToCone}.

\begin{thrm}\label{thm1GDik}  Let $(M,V)$ be an $L_\infty$ algebra together with a module, both of finite dimension over a field $K$ of characteristic zero, such that:
\begin{itemize} 
\item $M^i=0$ and $V^i=0$ for $i\neq 0,1$,
\item the differentials on $M$ and $V$ are zero, 
\item  the linear map $\pi:V^0\otimes (V^1)^\vee\to (M^1)^\vee$ induced from the multiplication map $m_2:M^1\otimes V^0\to V^1$ is injective.
\end{itemize} 

Assume that the $\Linf$ algebra $M$ is  obtained as a homotopy-transferred structure from a dgla $C$ with $\iota:M=HC\subset C$ as in (\ref{eqHtr}),  
and  $[\iota(M^0),C]=0$, where $[\,\_,\_]$ is the Lie bracket of $C$.   Let $\bz\in M^1$ denote the origin. For every $k\in \bN$ let $\cV_k\subset M^1$ be
 the closed subscheme   defined by minors of size $\dim V^0 -k+1$ of the matrix of linear forms on $M^1$ determined by $\pi$. 
 Then there is a canonical isomorphism of vector spaces $T\Def(M)= M^1$ and  an isomorphism of functors $\Def(M)\simeq \widehat{(M^1)}_\bz$ compatible with each other, inducing
 isomorphisms of functors  $\Def^0_k(M,V)\simeq \widehat{(\cV_k)}_\bz$ for every $k$.  
 \end{thrm}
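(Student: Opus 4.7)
The plan is to exploit the degree constraints on $M$ (namely $M^i=0$ for $i\neq 0,1$) together with the hypothesis $[\iota(M^0),C]=0$ to trivialize the deformation functor $\Def(M)$, and then use the injectivity of $\pi$ (equivalently, the surjectivity of the adjoint $\tilde m_2\colon M^1\to \Hom(V^0,V^1)$) to linearize the cohomology jump constraints via a formal coordinate change on $M^1$.

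First I would note that the Maurer-Cartan equation for $\omega\in M^1\otimes \mm_A$ is automatic: each $l_n(\omega^{\otimes n})$ has $M$-degree $n+(2-n)=2$, hence lies in $M^2=0$. Next, gauge equivalence is trivial. The transferred bracket $l_n(\lambda,\xi_1,\ldots,\xi_{n-1})$ with $\lambda\in M^0$ is a sum over binary trees in $C$ in which the leaf $\iota(\lambda)\in\iota(M^0)$ feeds into some Lie bracket $[\iota(\lambda),-]_C=0$; hence $l_n(\lambda,-)=0$ for all $n\geq 2$. Interpreting the homotopy relation via a Maurer-Cartan path $\tilde\omega=\omega(t)+\lambda(t)\,dt$ in $M\otimes\Omega^\bullet(\Delta^1)\otimes A$ and expanding the $dt$-component of the MC equation gives $\dot\omega(t)+\sum_{n\geq 2}\frac{1}{(n-1)!}l_n(\lambda(t),\omega(t)^{\otimes(n-1)})=0$, which by the above vanishing reduces to $\dot\omega(t)=0$. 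Thus $\sim$ is equality and $\Def(M;A)=M^1\otimes\mm_A=\widehat{(M^1)}_\bz(A)$ canonically, giving $T\Def(M)=M^1$.

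For the jump functors the extra work is to linearize the perturbed differential. Injectivity of $\pi$ is equivalent to surjectivity of $\tilde m_2$, so I fix a vector space splitting $M^1=\ker\tilde m_2\oplus M^{1,\mathrm{fr}}$ and a section $T\colon\Hom(V^0,V^1)\to M^{1,\mathrm{fr}}$ of $\tilde m_2$. Writing $d_V^\omega=\tilde m_2(\omega)+R(\omega)$ with $R$ of $\omega$-order $\geq 2$, I solve the fixed-point equation
\[
P(\omega)=-T\bigl(R(\omega+P(\omega))\bigr)
\]
iteratively in $\mm_A$-adic order; convergence is automatic since the order of $P^{(k)}$ strictly grows. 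This produces a formal automorphism $\Psi(\omega):=\omega+P(\omega)$ of $\widehat{(M^1)}_\bz$ with identity linearization satisfying $d_V^{\Psi(\omega)}=\tilde m_2(\omega)=m_2(\omega,-)$ as an $A$-linear map $V^0\otimes A\to V^1\otimes A$ for every Artinian $A$ and every $\omega\in M^1\otimes\mm_A$.

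Finally, taking $\Psi$ as the isomorphism $\widehat{(M^1)}_\bz\xrightarrow{\sim}\Def(M)$, one has $\Psi(\omega)\in\Def^0_k(M,V;A)$ iff $J^0_k(V\otimes A,\,d_V^{\Psi(\omega)})=J^0_k(V\otimes A,\,m_2(\omega,-))$ vanishes, which by Definition \ref{defCJI} is precisely the ideal of $(\dim V^0-k+1)$-minors of the matrix of linear forms on $M^1$ determined by $\pi$, evaluated at $\omega$, i.e.\ the defining ideal of $\cV_k$. A single $\Psi$ serves every $k$, so the induced isomorphisms $\Def^0_k(M,V)\simeq\widehat{(\cV_k)}_\bz$ are compatible with each other and with $\Def(M)\simeq\widehat{(M^1)}_\bz$. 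I expect the main obstacle to be Step 3: the hypothesis on $\iota(M^0)$ does not itself kill the higher $m_n$, and one genuinely needs the surjectivity furnished by injective $\pi$ to absorb those higher operations into a formal change of variables on $M^1$.
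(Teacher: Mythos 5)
Your proposal is correct and follows essentially the same route as the paper: automatic Maurer--Cartan, triviality of the homotopy relation via $[\iota(M^0),C]=0$, and a formal coordinate change on $\widehat{(M^1)}_\bz$ that linearizes the perturbed differential $d_V^\omega$. The paper packages the last step a bit differently — it assembles the universal matrix $d_{univ}$ over $\widehat{S}=\widehat{\mathrm{Sym}}\,(M^1)^\vee$ and observes that, because the linear parts of its $ll'$ entries are linearly independent (equivalent to injectivity of $\pi$), one may directly extend those entries to a formal coordinate system, rather than running your fixed-point iteration — but these are two implementations of the same automorphism of $\widehat{(M^1)}_\bz$, and the rest of the argument is identical.
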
 
 \begin{proof}
 Denote by $l=\{l_n\}_{n\ge 1}$ the $\Linf$ algebra structure on $M$, and by $m=\{m_n\}_{n\ge 1}$ the $\Linf$ module structure on $V$. We have $l_1=0$ and $m_1=0$ by assumption.

Let $\omega\in  M^1$. Since $l_n$ has degree $2-n$, $l_n(\omega^{\otimes n})$ is in $M^2=0$.  Hence $M^1\otimes \mm_A=\MC_M(A)$ for all $A\in\Art$, where the Maurer-Cartan set is defined by
$$
\MC_M(A)=\Bigl\{\omega\in M^1\otimes\mm_A\mid \sum_{n\ge 1}\frac{1}{n!}l_n^A(\omega^{\otimes n})=0\Bigr\}.
$$
 This gives $T\Def(M)= M^1$, cf. Theorem \ref{propTDef}. Our assumption on $M^0$  implies by a standard argument that no two elements in $M^1\otimes \mm_A$ are homotopy equivalent    \cite[Lemma 12.43]{BD}. Thus
$
\Def(M)\simeq (\widehat{M^1})_{\bz}. 
$

Since there is no homotopy equivalence to mod out by, we also have
\begin{equation}\label{eqLJIp1}
\Def^0_k(M,V;A)=\{\omega\in  M^1\otimes \mm_A\mid   J^0_k(V\otimes A, d_\omega)=0 \}
\end{equation}
where
$$d_\omega:V^0\otimes A\to V^1\otimes A,\quad
d_\omega(\_):=\sum_{n\ge 1}\frac{1}{n!}m_{n+1}^A(\omega^{\otimes n}\otimes\_),
$$
since $V$ is concentrated in degrees 0,1 and $m_1=0$, cf. (\ref{eqLJI}). It will be slightly more convenient to work with the graded symmetric version of the $\Linf$ pair structure instead of the graded anti-symmetric version. If we keep denoting by $\{m_n\}_n$ the graded symmetric version of the $\Linf$ module structure on $V$, then the formula for $d_{\omega}$ stays the same and we can replace $\omega^{\otimes n}$ by the symmetric self-product which we denote $\omega^{\vee n}$,   \cite[12.37, 12.48]{BD}.

We construct now a universal matrix $d_{univ}$ with entries in the completion $\widehat S$ at the maximal ideal at $\bz\in M^1$ of the symmetric algebra $S$ of $(M^1)^\vee$, such that $d_{univ}$ gives all $d_\omega$ for all  $A$ and $\omega$ as above. Let $s=\dim M^1$.
Fix  a basis $e_1,\ldots, e_s$  of the vector space $M^1$.  Let $x_1,\ldots , x_s$ be the dual basis, so that $S=K[x_1,\ldots ,x_s]$ and $\widehat{S}=K\llbracket x_1,\ldots ,x_s \rrbracket$. Let
$$
\omega_{univ}=\sum_{i=1}^se_i\otimes x_i \quad\in M^1\otimes S.
$$
Define the morphism of free $\widehat{S}$-modules
\be\label{eqDUNI}
d_{univ}:V^0\otimes \widehat{S} \to V^1\otimes \widehat{S},\quad \sigma\otimes 1\mapsto  \sum_{n\geq 1}\frac{1}{n!} (m_{n+1}\otimes\id_{\widehat{S}})((\omega_{univ})^{\vee n}\otimes(\sigma\otimes 1)).
\ee
Fixing bases for $V^0, V^1$, we write $d_{univ}$ as a matrix with entries in $\widehat{S}$.
By construction we have for all $k$ canonical isomorphisms of subfunctors
$$
\Def^0_k(M,V)= {\rm{Spf}}(\widehat{S}/J^0_k(d_{univ}))
$$
compatible with the inclusion of subfunctors for $k\le k'$.

The  matrix $B$ formed by the linear parts of the entries of $d_{univ}$ is by construction the matrix of linear forms on $M^1$ determined by $\pi$ and the above vector space bases. 
By the injectivity assumption on $\pi$, the entries of $B$ are linearly independent. Hence we can find an isomorphism of $\widehat{S}$ such that 
$d_{univ}$ becomes $B$. This implies the claim since $\widehat{(\cV_k)}_\bz$ is defined by the ideal $J^0_k(B)\subset\widehat{S}$. 
 \end{proof}

\begin{rmk}\label{rmk1GDik}
Note that closed subscheme $\cV_k$ of $M^1$  is isomorphic to its tangent cone $TC_\bz\cV_k$ at $\bz$ since $J^0_k(B)$ is a homogeneous ideal.
\end{rmk}

\subs{\bf Formal neighborhoods of Brill-Noether loci.} We prove now the formal neighborhood version of Theorem \ref{thmGenToConeEt}. This is only a slightly improved version of \cite[Thm. 0.1]{Po}. The improvement might be already contained in the proof from {\it loc. cit.}, as mentioned in the introduction. 
 
\begin{theorem}\label{thmGenToCone} Let $E, F$ be as in Theorem \ref{thmGenToConeEt}. There is a canonical isomorphism of $K$-vector spaces between the tangent space $T_E\cM_{n,d}$ and  $\HH^1(C,E\otimes E^\vee)$. If $\pi_{E,F}$ is  injective, there is an isomorphism between the formal neighborhood of $E$ in $\cM_{n,d}$ and the formal neighborhood of the origin in $\HH^1(C,E\otimes E^\vee)$ inducing  for every $1\le k\le l$ isomorphisms between:

\begin{itemize}
\item the formal neighborhood of 
$
\cV_{k}(F)$
 at $E$ in $\cM_{n,d}$,
 \item the formal neighborhood at the vertex of the tangent cone $TC_E\cV_k(F)$  in the tangent space $T_E\cM_{n,d}$.
  \end{itemize}
Moreover,  $TC_E\cV_{k}(F)$ is the closed 
  subscheme defined by the ideal generated by the minors of size $l-k+1$ of the  $l'\times l$ matrix of linear forms on $\HH^1(C,E\otimes E^\vee)$ given by $\pi_{E,F}$. 
 \end{theorem}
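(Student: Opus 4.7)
The plan is to apply Theorem \ref{thm1GDik} to the $L_\infty$ pair controlling the local deformation theory of $E$ with cohomology constraints, after identifying its map $\pi$ with the Petri map. By Proposition \ref{rmkPair}, such deformations are governed by the dgl pair $(\mathcal{C}, N) := (R\Gamma(C, \End(E)), R\Gamma(C, E\otimes F))$; by homotopy transfer this is weakly equivalent as $L_\infty$ pair to $(M,V) := (H^\bullet(C, \End E), H^\bullet(C, E\otimes F))$ equipped with a transferred structure coming from a chosen homotopy retract as in \eqref{eqHtr}. Since $C$ is a curve, $M$ and $V$ are concentrated in degrees $0$ and $1$, and the transferred differentials $l_1$, $m_1$ vanish, so the first two hypotheses of Theorem \ref{thm1GDik} are automatic. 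The canonical identification $T_E\cM_{n,d} = H^1(C, E\otimes E^\vee)$ is a special case of Proposition \ref{propOurLePot}(2).

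I would next identify the map $\pi: V^0 \otimes (V^1)^\vee \to (M^1)^\vee$ induced by $m_2$ with $\pi_{E,F}$. Unwinding definitions, $m_2$ is the cup-product action $H^1(C, \End E) \otimes H^0(C, E\otimes F) \to H^1(C, E\otimes F)$; applying Serre duality to identify $(V^1)^\vee \cong H^0(C, E^\vee \otimes F^\vee \otimes \omega_C)$ and $(M^1)^\vee \cong H^0(C, \End E \otimes \omega_C)$ transforms the dual of $m_2$ into exactly the Petri map $\pi_{E,F}$, so its injectivity provides the third hypothesis of Theorem \ref{thm1GDik}. For the final hypothesis $[\iota(M^0), \mathcal{C}] = 0$, I would use that stability of $E$ gives $M^0 = H^0(C, \End E) = K \cdot \id_E$; one can choose the homotopy retract so that $\iota$ sends the generator of $M^0$ to the identity section of $\End E$ in $\mathcal{C}^0$. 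Since the dgla bracket on $\mathcal{C}$ is the graded commutator of the associative composition in the endomorphism bundle, this identity is central, yielding the required vanishing.

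With all hypotheses in place, Theorem \ref{thm1GDik} provides a canonical isomorphism $\Def(M) \cong \widehat{(M^1)}_{\bz}$ restricting, for each $1 \le k \le l$, to $\Def^0_k(M,V) \cong \widehat{(\cV_k)}_{\bz}$, where $\cV_k \subset M^1 = H^1(C, E\otimes E^\vee)$ is the determinantal subscheme cut out by the $(l-k+1)$-minors of the $l' \times l$ matrix of linear forms on $M^1$ representing $\pi_{E,F}$. Combining this with the functor isomorphisms \eqref{eqTISOM} (applied to the formal neighborhoods $\widehat{\cM_{n,d}}_E$ and $\widehat{\cV_k(F)}_E$) yields the asserted formal isomorphisms, and Remark \ref{rmk1GDik} identifies $\cV_k$ with its tangent cone at the origin, giving the description of $TC_E\cV_k(F)$. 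I expect the most delicate point to be the verification of $[\iota(M^0), \mathcal{C}] = 0$: this is where stability enters decisively and forces the homotopy retract to be chosen compatibly with the scalar subalgebra of $\mathcal{C}^0$. Everything else amounts to unwinding Serre duality and invoking the machinery of Section \ref{secDFT}.
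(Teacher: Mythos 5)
Your proposal is correct and follows essentially the same route as the paper's proof: reduce via Propositions \ref{rmkPair} and \ref{propOurLePot} to the transferred $\Linf$ pair $(\HH^\ubul(C,\enmo E),\HH^\ubul(C,E\otimes F))$ and then invoke Theorem \ref{thm1GDik} together with Remark \ref{rmk1GDik}. Your extra detail spelling out the Serre-duality identification of $\pi$ with $\pi_{E,F}$ is a welcome addition; the only small over-caution is the phrase ``one can choose the homotopy retract so that $\iota$ sends the generator to $\id_E$'' --- since $C$ is $\bN$-graded there are no $0$-coboundaries, so $\iota$ must land in $\ker d_C^0=\HH^0C=K\cdot\id_E$ for \emph{any} homotopy retract, and centrality of $\id_E$ then gives $[\iota(M^0),C]=0$ automatically, which is what the paper observes.
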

 
  \begin{proof}
   By Proposition \ref{rmkPair}, the formal neighborhoods of $E$ in $\cV^i_k(F)$ are controlled by the dgl pair $( R\Gamma(C, \enmo  (E)), R\Gamma(C,E\otimes F))$. By Proposition \ref{propOurLePot}, these formal neighborhoods are controlled by the $\Linf$ pair  $(\HH^\ubul(C,\enmo(E)), \HH^\ubul(C,E\otimes F))$, with an $\Linf$ structure  obtained by homotopy transfer from the dgl pair.  Thus there is isomorphism of functors
$$\widehat{(\cM_{n,d})}_E\simeq \Def(\HH^\ubul(C,\enmo(E)))$$ inducing for all $i,k\in\bN$ an isomorphism of subfunctors $$\widehat{(\cV^i_k)}_E\simeq \Def^i_k(\HH^\ubul(C,\enmo(E)), \HH^\ubul(C,E\otimes F)).$$
We want to apply Theorem \ref{thm1GDik} with $(M,V)=(\HH^\ubul(C,\enmo(E)), \HH^\ubul(C,E\otimes F))$, so we need to check the assumptions. Clearly the dimensional requirements are satisfied since $C$ is a curve. Since $E$ is stable, $\HH^0(C,\enmo(E))=K\cdot \id_E$ is one-dimensional, generated by the identity vector bundle morphism. Since $\id_E$ viewed as an element of the dgla $R\Gamma(C, \enmo  (E))$ commutes with the whole dgla, the condition on $M^0$ is satisfied. Since $(M,V)$ is a transferred structure, the differentials are zero. Note that $\pi:V^0\otimes (V^1)^\vee\to (M^1)^\vee$ is exactly the Petri map $\pi_{E,F}$. Hence the last condition that $\pi$ is injective is part of our hypothesis. Thus Theorem \ref{thm1GDik} and Remark \ref{rmk1GDik} apply. 
\end{proof}

\section{From formal to \'etale neighborhoods}\label{secApplBN}

The goal of this section is to prove  Theorem \ref{thmGenToConeEt}. Let $K$ be a field  of characteristic zero.

\begin{prop}\label{propArSteroids}
Let $X$ be a smooth  $K$-variety, $Y$ a closed subscheme, and $x\in Y$ a point. Let $T=T_xX$, $C=TC_xY$, and $0\in C$ be the vertex. Suppose there exists a $K$-isomorphism of formal neighborhoods $\widehat{X}_x\simeq \wh{T}_0$ inducing an isomorphism $\wh{Y}_x\simeq \wh C_0$. Then:
\begin{itemize}
\item There exists a local isomorphism for the \'etale topology $(X,x)\simeq (T,0)$ inducing a local isomorphism for the \'etale topology $(Y,x)\simeq (C,0)$.
\item If $K=\bC$, there exists a local analytic isomorphism  $(X,x)\simeq (T,0)$ inducing a local analytic isomorphism  $(Y,x)\simeq (C,0)$.
\end{itemize}
\end{prop}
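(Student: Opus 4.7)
The plan is to deduce both parts of the proposition from Artin's approximation theorem applied to an algebraic system encoding the data of an isomorphism of pairs. Once the problem is phrased as a system of polynomial equations admitting a formal solution with an open non-degeneracy condition, Artin approximation automatically produces henselian (respectively, convergent) solutions.

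First I would encode an isomorphism of pairs algebraically. Shrinking $X$ to an affine neighborhood of $x$, pick generators $h_1,\ldots,h_m \in \mathfrak{m}_{X,x}$ of the ideal $I_Y$ locally at $x$; choose linear coordinates $y_1,\ldots,y_n$ on $T = T_xX$ centered at $0$, and fix homogeneous generators $g_1,\ldots,g_r \in K[y_1,\ldots,y_n]$ of $I_C$. A map germ $\varphi \colon (X,x) \to (T,0)$ is the data of functions $\varphi_1,\ldots,\varphi_n \in \mathfrak{m}_{X,x}$, and it provides an \'etale-local isomorphism of the pair $(X,Y)$ with $(T,C)$ as soon as (a) the Jacobian of $(\varphi_1,\ldots,\varphi_n)$ at $x$ is invertible, and (b) there exist auxiliary functions $a_{ji}, b_{ij} \in \cO_{X,x}$ satisfying
$$
g_j(\varphi_1,\ldots,\varphi_n) = \sum_i a_{ji}\, h_i, \qquad h_i = \sum_j b_{ij}\, g_j(\varphi_1,\ldots,\varphi_n),
$$
which together enforce $\varphi^{-1}I_C = I_Y$. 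Condition (b) is a finite system of polynomial identities in the unknowns $\varphi_\bullet, a_{\bullet\bullet}, b_{\bullet\bullet}$, and (a) is a Zariski-open condition on their $1$-jets.

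Next I would invoke Artin approximation. By hypothesis the formal isomorphism $\widehat{X}_x \simeq \widehat{T}_0$ sending $\widehat{Y}_x$ to $\widehat{C}_0$ supplies a solution of (b) in the completion $\widehat{\cO}_{X,x}$, with nonvanishing Jacobian (a). Artin's approximation theorem then yields a solution in the henselization $\cO^h_{X,x}$ approximating the formal one to arbitrary finite order; high enough order preserves the open condition (a). The resulting $\varphi_1,\ldots,\varphi_n$ define a map germ from an \'etale neighborhood of $x$ in $X$ to $(T,0)$ that is \'etale at $x$; since $X$ is smooth and $\dim X = \dim T$, this is a local isomorphism for the \'etale topology. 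The identity $\varphi^{-1}I_C = I_Y$ inherited from (b) gives the induced \'etale-local isomorphism $(Y,x) \simeq (C,0)$. For $K = \bC$ one invokes instead the analytic variant of Artin approximation, which approximates the formal solution by a convergent one, and the same argument produces a local analytic isomorphism of pairs.

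The one genuine subtlety is the clean encoding of the equality of ideals $\varphi^{-1}I_C = I_Y$ as a finite system of polynomial identities: one must introduce the auxiliary multiplier matrices $(a_{ji})$ and $(b_{ij})$ among the unknowns so that the hypothesis of Artin approximation is literally met. Once that bookkeeping is done, the theorem applies as a black box and delivers both conclusions simultaneously.
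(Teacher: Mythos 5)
Your proposal is correct and takes essentially the same route as the paper: reduce to a finite system of polynomial equations over the local ring encoding the formal isomorphism of pairs, apply Artin approximation, and observe that the Jacobian non-degeneracy is preserved to order two. The one place you diverge is in encoding the equality of ideals $\varphi^*I_C=I_Y$: you impose multiplier matrices in both directions, whereas the paper uses only the inclusion $(g(Y))\subseteq(h)$ together with a full-rank condition on the single multiplier matrix modulo the maximal ideal (justified by a Nakayama argument with a minimal generating set for $I_C$); your two-sided bookkeeping is a perfectly valid and arguably cleaner alternative, though the paper's rank-condition version lends itself more visibly to the block-triangular structure used in the tower generalization (Proposition \ref{propTower}).
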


\begin{proof}
The idea for the following proof is due to C. Chiu. All eventual mistakes are mine. 

It is enough to prove the first assertion. The proof is an adaptation of the proof of  \cite[Cor. 2.6]{ar1}. We recast assumptions as the existence of formal solutions to a system of polynomial equations. Applying Artin approximation, the formal solutions will be approximated by algebraic power series solutions. The approximation will be good enough to give the desired local isomorphisms for the \'etale topology.

We can assume that $X$ is affine, embedded as a closed subvariety of $\bA^N$ and $x$ is the origin. So $X=\spec K[y]/(f)$ and $Y=\spec K[y]/(f,g)$ where $y$ is short-hand notation for $y_1,\ldots,y_N$, $f$ is a tuple $f_1,\ldots, f_r$ and  $g$ is a tuple $g_1,\ldots,g_s$ with  $f_i,g_j\in K[y]$ such that $f_i(0)=g_j(0)=0$, and $(f,g)$ is the ideal generated by all $f_i, g_j$. Also, we can assume that $T=\spec K[x]$ and $C=\spec K[x]/(h)$ with $x=x_1,\ldots, x_n$, where $n=\dim X$, and $h=h_1,\ldots, h_{t}$ with $h_i\in K[x]$ such that $h_i(0)=0$. 

Since the ideal $(h)$ defines a cone, the size of a minimal set of generators for $(h)$ is the same the size of minimal set of generators for $(h)$ in the local ring $k[x]_{(x)}$, and the latter size is equal to 
$$\dim_K ((h)\otimes_{K[ x]}K[ x]_{(x)}/(x)K[ x]_{(x)})= \length_{K[ x]_{(x)}}((h)\otimes_{K[ x]}K[ x]_{(x)}/(x)K[ x]_{(x)})$$ 
by Nakayama Lemma. Here $(x)$ is the ideal generated by the $x_i$. Thus, assuming that  $h_i$ minimally generate $(h)$, we have that $t$ is equal to the above length, and by flatness of completion, 
 
 $$t=\dim_K ((h)\otimes_{K[x]}K\llbracket x\rrbracket/(x))=\length_{K\llbracket x\rrbracket}((h)\otimes_{K[x]}K\llbracket x\rrbracket/(x)).$$


The local isomorphism $\wh{\cO}_{X,0}\simeq \wh{\cO}_{T,0}$ is given by an assigment $y_i\mapsto Y_i$
where $Y=Y_1,\ldots, Y_N$ in $K\llbracket x\rrbracket$ is 
  a power series solution to the system of polynomial equations $f(y)=0$. Since $X$ is smooth we can assume that $y_1,\ldots , y_n$ map to a regular system of parameters in $K\llbracket x\rrbracket$. The assignment being a local isomorphism is  equivalent to the condition that $\det (\pa Y_i/\pa x_j)_{1\le i,j\le n}\not\equiv 0$ modulo $(x)$. The condition that this induces a local isomorphism $\wh{\cO}_{Y,0}\simeq\wh{\cO}_{C,0}$ is equivalent with the condition the  ideals $(g)$ and $(h)$ are mapped to each other. That $(g(Y))$ is  a subideal of $(h)$ is equivalent to the existence of a matrix $A=(A_{ij})_{1\le i\le s, 1\le j\le t}$ in $K\llbracket x\rrbracket$ such that  $g(Y)=A\cdot h(x)$. Then the equality of ideals $(g(Y))=(h)$ is equivalent by Nakayama Lemma with $A$ modulo $(x)$ having  rank $t$ over $K$. This implies in particular that $s\ge t$, hence $A$ must have full rank modulo $(x)$.
Note that $(A,Y)$ is then a solution in $K\llbracket x\rrbracket$ of the system of polynomial equations $h(x)=a\cdot g(y)$ in $K[x][y,a]$, where $a=(a_{ij})_{1\le i\le s, 1\le j\le t}$ are new independent variables.
  
  To summarize, the assumptions are equivalent to having a solution $(A,Y)$ in $K\llbracket x\rrbracket$ to the system of equations in  $K[x][y,a]$ given by
\be\label{eqsysA}
f(y)=0,\quad h(x)=a\cdot g(y),
\ee 
such that
\be\label{eqsyscA}
\det(\pa Y_i/\pa x_j)_{1\le i,j\le n}\not\equiv 0\text{ mod } (x),\quad \rank (A  \text{ mod } (x))=t.
\ee 
 
 By Artin approximation \cite[Cor. 2.1]{ar1}, we can find an affine \'etale neighborhood $(T',0')$ of $0$ in $T$, and
  a solution $(A',Y')$ to (\ref{eqsysA}) in $\cO_{T'}$ such that $(A',Y')\equiv (A,Y)$ modulo $(x)^2$. Then the conditions (\ref{eqsyscA}) are  satisfied for $(A',Y')$ modulo $(x)$. Denote by $C'$ the restriction of $T'$ to $C$. Then $C'$ is given by $(h)\cdot\cO_{T'}$ and $(C',0')$ is an \'etale neighborhood of $0$ in $C$. 
 Let  $K[y]/(f)\to\cO_{T'}$ be the $K$-algebra map given by $Y'$, corresponding to a morphism $T'\to X$. Since  it induces an isomorphisms on the completed local rings $\wh{\cO}_{X,0}\simeq\wh{\cO}_{T',0'}$, $T'\to X$ is an \'etale morphism, cf. \cite[top of p.29]{ar1}. We have $h=A'\cdot g(Y')$. We also have that $A'$ modulo $\mm'$ has full rank $t$, where $\mm'$ is the maximal ideal at $0'$. By flatness of $T'\to T$, 
 $$
 t=\length_{(\cO_{T'})_{\mm'}}((h)\cO_{T'}\otimes_{\cO_{T'}}(\cO_{T'})_{\mm'}/\mm'(\cO_{T'})_{\mm'}) = \dim_K ((h)\cO_{T'}\otimes_{\cO_{T'}}(\cO_{T'})_{\mm'}/\mm'(\cO_{T'})_{\mm'}).
 $$
It follows by Nakayama Lemma that there is an equality of ideals $(g(Y'))=(h)\cO_{T'}$.  Hence the restriction of $T'\to X$ to $Y$, which is \'etale over $Y$, is exactly $C'$. 
  \end{proof}

The proposition can be extended from a pair to a tower of closed embeddings:

\begin{prop}\label{propTower}
Let $X$ be a smooth  $K$-variety, $X\supset Y_1\supset Y_2\supset \ldots Y_m$  closed subschemes, and $x\in Y_m$ a point. Let $T=T_xX$, $C_i=TC_xY_i$, and $0\in C_m$ be the vertex. Suppose there exists a $K$-isomorphism of formal neighborhoods $\widehat{X}_x\simeq \wh{T}_0$ inducing  isomorphisms

$$
\begin{tikzcd}
\wh{X}_x \arrow[d,"\simeq"] \arrow[r, hookleftarrow]& \wh{Y}_{1,x} \arrow[d,"\simeq"] \arrow[r, hookleftarrow]& \wh{Y}_{2,x} \arrow[d,"\simeq"] \arrow[r, hookleftarrow] &  \ldots \arrow[r, hookleftarrow]& \wh{Y}_{m,x} \arrow[d,"\simeq"]\\
\wh{T}_0 \arrow[r, hookleftarrow]& \wh{C}_{1,0} \arrow[r, hookleftarrow]& \wh{C}_{2,0} \arrow[r, hookleftarrow]&  \ldots \arrow[r, hookleftarrow]& \wh{C}_{m,0}.\\
\end{tikzcd}
$$

Then:
\begin{itemize}
\item There exists a local isomorphism for the \'etale topology $(X,x)\simeq (T,0)$ inducing  local isomorphisms for the \'etale topology 

\be\label{eqdiT}
\begin{tikzcd}
(X,x) \arrow[d,"\simeq"] \arrow[r, hookleftarrow]& (Y_1,x) \arrow[d,"\simeq"] \arrow[r, hookleftarrow]& (Y_2,x) \arrow[d,"\simeq"] \arrow[r, hookleftarrow] &  \ldots \arrow[r, hookleftarrow]& (Y_m,x) \arrow[d,"\simeq"]\\
(T,0) \arrow[r, hookleftarrow]& (C_1,0) \arrow[r, hookleftarrow]& (C_2,0) \arrow[r, hookleftarrow]&  \ldots \arrow[r, hookleftarrow]&(C_m,0).\\
\end{tikzcd}
\ee

\item If $K=\bC$, there exists a local analytic isomorphism  $(X,x)\simeq (T,0)$ inducing  local analytic isomorphisms   
in the  diagram (\ref{eqdiT}).
\end{itemize}
\end{prop}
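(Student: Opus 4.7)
The approach is to adapt the proof of Proposition~\ref{propArSteroids} by bundling the tower data into a single system of polynomial equations and invoking Artin approximation only once. Since Artin approximation handles any finite system simultaneously, essentially nothing new happens structurally when we go from a single closed subscheme $Y$ to a chain $Y_1 \supset Y_2 \supset \cdots \supset Y_m$.

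Concretely, I would first embed $X$ as an affine closed subvariety of some $\bA^N$ with $x$ at the origin, writing $X = \spec K[y]/(f)$ and $Y_i = \spec K[y]/(f, g^{(i)})$ for tuples $f$ and $g^{(i)}$. Choose minimal generating tuples $h^{(i)}$ of length $t_i$ for the ideal of each cone $C_i$ in $T = \spec K[x]$. The assumed compatible isomorphisms of formal neighborhoods translate into the data of a power series solution $Y \in K\llbracket x\rrbracket^N$ together with matrices $A^{(i)} \in K\llbracket x\rrbracket^{s_i \times t_i}$ satisfying $f(Y) = 0$ and $g^{(i)}(Y) = A^{(i)} \cdot h^{(i)}(x)$ for every $i$, subject to the non-degeneracy conditions that the Jacobian $(\pa Y_j / \pa x_k)_{1 \le j,k \le n}$ and every $A^{(i)}$ have maximal rank modulo $(x)$.

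I would then feed the combined system
\be
f(y) = 0, \quad g^{(i)}(y) = a^{(i)} \cdot h^{(i)}(x) \quad (i = 1, \ldots, m)
\ee
into Artin approximation \cite[Cor.~2.1]{ar1}, producing an affine \'etale neighborhood $(T', 0') \to (T, 0)$ and an algebraic solution $(Y', A^{(1)\prime}, \ldots, A^{(m)\prime})$ in $\cO_{T'}$ congruent to the formal one modulo $(x)^2$. Since congruence modulo $(x)^2$ preserves all linear parts, both non-degeneracy conditions persist for the approximate solution. The assignment $y_j \mapsto Y'_j$ then defines a morphism $T' \to X$ that is \'etale by the same argument as at the end of the proof of Proposition~\ref{propArSteroids}. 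Applying the Nakayama step from that proof to each index $i$ independently, using the minimality of $h^{(i)}$ and the full-rank condition on $A^{(i)\prime}$ modulo the maximal ideal at $0'$, yields the equality of ideals $(g^{(i)}(Y')) = (h^{(i)}) \cO_{T'}$. Hence the \'etale morphism $T' \to X$ restricts over each $Y_i$ to the \'etale morphism $C'_i := T' \times_X Y_i \to Y_i$, where $C'_i$ is cut out in $T'$ by $(h^{(i)}) \cO_{T'}$, producing the desired compatible diagram (\ref{eqdiT}).

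For the analytic version when $K = \bC$, one simply observes that an \'etale morphism of complex algebraic varieties is locally an analytic isomorphism at the distinguished points. The main point is really bookkeeping: I expect no genuine new obstacle beyond the single-pair case, with the only care being to insist that each $h^{(i)}$ be a minimal generating set so that the Nakayama argument from Proposition~\ref{propArSteroids} goes through at every level of the tower independently. The hardest conceptual step, shared with Proposition~\ref{propArSteroids}, is translating the formal-neighborhood hypothesis into polynomial equations in a form to which Artin approximation applies; once that encoding is fixed, extending from one pair to a tower amounts to adjoining more matrix-variable blocks $a^{(i)}$ and more corresponding equations.
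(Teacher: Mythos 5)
Your proposal is correct and takes essentially the same route as the paper: recast the hypothesis as a polynomial system over $K[x]$ together with open (rank) side conditions, invoke Artin approximation once, and pass the rank conditions through the Nakayama argument. The one place you diverge is in the bookkeeping: the paper chooses \emph{nested} generating tuples, writing the ideal of $Y_{i+1}$ as $(f,g^{(i)},\tilde g)$ and the ideal of $C_{i+1}$ as $(h^{(i)},\tilde h)$, so that the auxiliary matrix $A^{(i+1)}$ can be taken block lower-triangular with $A^{(i)}$ as its upper-left block, and the full-rank condition on $A^{(i+1)}$ reduces to a full-rank condition on the new diagonal block $\tilde A$. You instead decouple the levels of the tower, giving each $Y_i$ its own generating tuple $g^{(i)}$, minimal generating tuple $h^{(i)}$ for $I(C_i)$, and matrix $A^{(i)}$, with an independent rank condition at each level. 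Your version is slightly leaner since no block structure needs to be maintained, and the nesting $(g^{(1)}(Y'))\subset\cdots\subset(g^{(m)}(Y'))$ of the approximated ideals comes for free from $(f,g^{(i)})\subset(f,g^{(i+1)})$ together with $f(Y')=0$. Both encodings feed Artin approximation a single system and thus produce a single $Y'$ serving all levels simultaneously, which is what makes the resulting \'etale map compatible with the whole tower.
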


\begin{proof} We give the proof for $m=2$. For higher $m$ the proof is similar. For $m=2$ we start from the proof of Proposition \ref{propArSteroids} by taking $Y=Y_1$, $C=C_1$,  using the objects and the notation introduced there. We can assume $Y_2$ is given by the ideal $(g,  \tilde g)$, where $\tilde g=g_{s+1},\ldots,g_{s_2}\in K[y]$ with $g_j(0)=0$. We can assume $C_2$ is given by the ideal $(h,\tilde h)$ with $\tilde h=h_{t+1},\ldots,h_{t_2}\in K[x]$, where $h_j(0)=0$. As before, we can assume $t_2$ is the minimal number of generators locally at the origin of the ideal $(h,\tilde h)$, that is, $t_2=\dim_K ((h,\tilde h)\otimes_{K[x]} K\llbracket x\rrbracket)$. Let $g^{(2)}$ be the vector of polynomials $g,\tilde g$. Let $h^{(2)}$ be the vector of polynomials $h,\tilde h$. 

The extra assumption in this case comes from the equality of ideals $(g^{(2)}(Y))=(h^{(2)})$.
This is equivalent to the following. There exists a matrix $ A^{(2)}=(\tilde A_{ij})_{1\le i\le s_2,1\le j\le t_2}$ of elements in $K\llbracket x\rrbracket$ such that 
$$
A^{(2)} = \left(
\begin{matrix}
A & O\\
B & \tilde A
\end{matrix}
\right)
$$
where $A$ is the $s\times t$ matrix as before, $O$ is the zero matrix,  and $B$ and $\tilde A$ are matrices of appropriate sizes. The matrix $A^{(2)}$ must in addition satisfy that  its reduction modulo $(x)$ has full rank, $t_2$. This is equivalent to reduction of  $\tilde A$ modulo $(x)$ having full rank, $t_2-t$, by the assumption on $A$. 

We introduce as before new independent variables $b$, $\tilde a$. Let $a^{(2)}$ be the matrix $\big(\begin{smallmatrix}
a & 0\\
 b & \tilde a
\end{smallmatrix}\big)$.
Then, following the proof from above, the assumptions are equivalent to having a solution $(A,B,\tilde A, Y)$ in $K\llbracket x\rrbracket$ to the system of equations in  $K[x][y,a,b,\tilde a]$ given by
\be\label{eqsysA2}
f(y)=0,\quad h(x)=a\cdot g(y),\quad \tilde h(x)=a^{(2)}\cdot\tilde g(y)
\ee 
such that
\be\label{eqsyscA2}
\det(\pa Y_i/\pa x_j)_{1\le i,j\le n}\not\equiv 0\text{ mod } (x),\quad \rank (A  \text{ mod } (x))=t, \quad \rank (\tilde A  \text{ mod } (x))=t_2-t.
\ee 
We proceed as before via Artin approximation. We can find an affine \'etale neighborhood $(T',0')$ of $0$ in $T$, and
  a solution $(A',B', \tilde A',Y')$ to (\ref{eqsysA2}) in $\cO_{T'}$ such that $(A',B', \tilde A',Y')\equiv (A,B,\tilde A,Y)$ modulo $(x)^2$. Then the conditions (\ref{eqsyscA2}) are  satisfied for $(A',B', \tilde A',Y')$. This guarantees as before that the restrictions $C', C_2'$ of $T'\to T$ to $C$, $C_2$, respectively, are \'etale neighborhoods of $0$ in $Y$, $Y_2$, respectively.
\end{proof}

\noindent{\bf Proof of Theorem \ref{thmGenToConeEt}.} It follows from Proposition \ref{propTower} and Theorem \ref{thmGenToCone}. $\hfill\Box$

\section{Generic matrices}\label{secDet}

We review  some results on singularities of spaces of generic  matrices.  We take $K=\bC$. Fix  $a, b\in\bZ_{>0}$. We regard the affine space $\bA^{ab}$ as the space of $b\times a$ matrices.  By $\bz$ we denote the zero matrix in $\bA^{ab}$.  Without loss of generalization, we assume that $0<a\le b$.

\begin{defn}\label{defnGen}$\;$ 
\begin{enumerate}[(1)]

\item The {\it generic matrix} is the matrix $X=(x_{ij})$  of algebraically independent variables $x_{ij}$ with $1\le i\le b$, $1\le j\le a$.

\item For $k\in\bN$ let $J_k= J_k(a,b)$ be the ideal generated by the minors of size $a-k+1$ of the matrix $X=(x_{ij})$. We set $J_0=0$, and  $J_k=(1)$ if  $k\ge a+1$, and this is compatible with convention on minors from \ref{subNot}. The ideals $J_k$ are called {\it generic determinantal ideals}.
 
\item
Let
$$M_k=M_k(a,b):=\{ A\in \bA^{ab}\mid \rank(A)\le a-k\}.$$ 
The spaces $M_k$ are called {\it generic determinantal varieties}. 
\end{enumerate}

\end{defn}

It is well-known that $M_k$ is indeed an affine subvariety of $\bA^{ab}$ and that $J_k$ is the associated radical ideal \cite[II.3]{A+}. Here are some results about the singularities of generic determinantal varieties. Additional results are surveyed in \cite[I]{BD}, where one can also find a review of the terminology from singularity theory used below.

\begin{theorem}\label{thrmGen} Let $1\le k\le a\le b$ be natural numbers and $M_k$ the space of $b\times a$ matrices of rank $\le a-k$. Then:

\begin{enumerate}[(i)]

\item {\rm{(}}\cite[II.2]{A+}{\rm{)}} The variety $M_k$ is isomorphic its tangent cone at $\bz$, it has dimension $(a-k)(b+k)$, and its singular locus is $M_{k+1}$.


\item {\rm{(}}\cite[Prop. 2]{Ke}{\rm{)}} $M_k$ has rational singularities. 

\item {\rm{(}}\cite{Jo}, \cite{Roi}{\rm{)}} The log canonical threshold at $\bz$ of the pair $(\bA^{ab},M_k)$ equals  the global log canonical threshold and equals
$$
\min\left\{\frac{(a-i)(b-i)}{a -k+1-i}\mid i=0,\ldots ,a-k\right\}.
$$



\item {\rm{(}}\cite{Lor}{\rm{)}} The Bernstein-Sato polynomial of the generic determinantal ideal $J_1$ is
$$
\prod_{i=b-a+1}^b (s+i).
$$
The same holds for the local Bernstein-Sato polynomial at $\bz$.

\item {\rm{(}}\cite{Roi}{\rm{)}} If $a=b$, the (global) topological zeta function of the pair $(\bA^{ab},M_k)$ equals the local topological zeta function at the origin and is
$$
\prod_{\al\in \Omega}\frac{1}{1-\al^{-1}s}
$$
where $\Omega$ is the set of poles:
$$
\Omega=\left\{ -\frac{a^2}{a-k+1}, -\frac{(a-1)^2}{a-k}, -\frac{(a-2)^2}{a-k-1},\ldots, - k^2 \right\}.
$$


\item {\rm{(}\cite[4.3]{Jo}, \cite[\S 3]{Sta}\rm{)}} Consider $f_{a-k}:Y_{a-k}\to \bA^{ab}$ the composition of blowups of (strict transforms of) $M_a$, $M_{a-1}$, $\ldots$, $M_{k}$,  in this order.  At each stage this is the blowup of a smooth center in a smooth variety, such that $f_{a-k}$ is a log resolution $(\bA^{ab},M_{k})$. Moreover, the pullback of the ideal $I_{a-k+1}$ defining $M_{k}$ is $\cO_{Y_{a-k}}(-\sum_{i=0}^{a-k}(a-k+1-i)E_i)$, where $E_i$ is the (strict transform of the) divisor introduced by blowing up the (strict transform of) $M_{a-i}$.

\item {\rm{(}\cite{Gaf}\rm{)}} The stratification of $M_k$ given by $M_{t}\setminus M_{t+1}$ with $k\le t$ is a Whitney stratification, and the local Euler obstruction at $\bz$ of $M_k\subset \bA^{ab}$ is $\binom{a}{a-k}$.



\item\label{itMal} {\rm{(}\cite{Mal}\rm{)}} If $a=b$ the minimal discrepancies of $M_k$ along $M_{k+1}$ and, respectively, along a point $w\in M_{k'}\setminus M_{k'+1}$ with $k\le k'\le a$ are:
$$
\mld(M_{k+1};M_k) = k+1,\quad \mld(w;M_k) = a^2-kk'.
$$

\end{enumerate}

\end{theorem}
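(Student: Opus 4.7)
Each item (i)--(viii) is attributed to prior work, so the task is to verify that the standard facts about generic determinantal varieties do the job; rather than reprove them from scratch, I would organize the argument around the explicit log resolution in (vi), which drives most of the numerical parts. My plan is to establish (i) and (ii) first via the $\GL_a\times\GL_b$-action, then construct the log resolution, then read off (iii), (v), and (viii) from it, and finally treat (iv) and (vii) separately.

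For (i), the key point is that $\GL_a\times\GL_b$ acts on $\bA^{ab}$ with $M_k\setminus M_{k+1}$ as a single orbit; one computes $\dim M_k = (a-k)(b+k)$ by measuring the stabilizer of a rank $a-k$ matrix in block form, and forces the singular locus to be $M_{k+1}$ by equivariance together with smoothness on the open orbit. The isomorphism $M_k\simeq TC_\bz M_k$ is immediate from the homogeneity of the ideal $J_k$. Part (ii) follows by Kempf's collapsing: the incidence variety $\tilde M_k := \{(\Lambda, A)\in \mathrm{Gr}(k,a)\times \bA^{ab} : A\Lambda = 0\}$ is a vector bundle over the Grassmannian, hence smooth; the projection $\tilde M_k\to M_k$ is proper and birational, and one checks the vanishing $R^i\pi_*\cO_{\tilde M_k} = 0$ for $i>0$ by a Borel--Weil--Bott computation, giving rational singularities by Kempf's criterion.

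For (vi) I would induct on $a-k$: the blowup of the deepest stratum $M_a=\{\bz\}$ separates the strict transforms of $M_j$ for $j<a$, makes the strict transform of $M_{a-1}$ smooth (checked in an affine chart of a Grassmannian bundle, where it becomes a smaller determinantal problem), and meets the exceptional divisor transversely; iterating and tracking multiplicities yields the formula $\cO_Y(-\sum_i(a-k+1-i)E_i)$ for the pullback of $I_{a-k+1}$. From this resolution, (iii) becomes the formula $\mathrm{lct} = \min_i (a_i+1)/m_i$ applied to the $E_i$'s, with $m_i = a-k+1-i$ and $a_i = (a-i)(b-i)-1$; (viii) comes by identifying which $E_i$'s dominate the given stratum and reading off the discrepancy; and (v) is the Denef--Loeser formula for the topological zeta function, where the $a=b$ assumption makes the non-exceptional contributions collapse so that only the $E_i$'s survive in $\Omega$.

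The main obstacle is (iv), the Bernstein--Sato polynomial of $J_1$. The upper bound $\prod_{i=b-a+1}^b(s+i)$ follows from the prehomogeneous vector space structure: a Capelli-type identity produces an explicit $b$-operator of this degree. The lower bound---that no factor can be dropped---requires showing each root actually occurs, which uses the classification of simple equivariant $\cD$-modules on $\bA^{ab}$ together with a characteristic cycle calculation. Finally, for (vii), the orbit stratification by a connected reductive group is automatically Whitney, and the Euler obstruction formula $\binom{a}{a-k}$ is obtained by a direct count using the Nash blowup of $M_k$ and its relation to the Grassmannian fibration in Kempf's resolution.
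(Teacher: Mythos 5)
The paper does not prove Theorem~\ref{thrmGen} at all: it is a survey theorem, with each part carrying a citation, and the only additional content in the paper is the short remark following the statement explaining why item (v) holds locally (Docampo gives the global topological zeta function; the local version is extracted from his jet-scheme stratification). Your proposal, by contrast, is a plan to actually reprove everything, organized around the group action, the Kempf collapsing, and the Vainsencher-type chain of blowups, and to read the numerical invariants (iii), (v), (viii) off that resolution. For (i)--(iii), (v), (vi) your plan is in line with the arguments in the cited references, and your bookkeeping for (iii) is correct: the exceptional $E_i$ has log discrepancy $(a-i)(b-i)$ over $\bA^{ab}$ (its center is not contained in earlier exceptionals, so discrepancies do not accumulate), and $m_i=a-k+1-i$, yielding the stated formula.

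There is, however, a genuine gap in your treatment of (viii). The resolution in (vi) is a log resolution of the \emph{pair} $(\bA^{ab},M_k)$, and the discrepancies $a_i$ you compute there are discrepancies over the smooth ambient $\bA^{ab}$. But $\mld(M_{k+1};M_k)$ and $\mld(w;M_k)$ are intrinsic invariants of the (singular, normal, $\bQ$-Gorenstein) variety $M_k$ itself, defined via divisors over $M_k$. ``Reading off the discrepancy'' from the ambient log resolution does not produce them: you would need to pass to the strict transform $\widetilde{M_k}\to M_k$ and compute its relative canonical divisor, which requires identifying $K_{M_k}$ (and here the hypothesis $a=b$ enters, since it is what makes $M_k$ Gorenstein), and then to argue that this one log smooth model actually computes the minimum. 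None of this is a simple readout from (vi), and it is precisely why Mallory's cited computation proceeds via jet schemes and the Ein--Musta\c{t}\u{a}--Yasuda inversion of adjunction rather than via an explicit resolution. Your one-line claim for (viii) should be replaced either by that jet-scheme argument or by an explicit computation of $K_{\widetilde{M_k}/M_k}$ on the strict transform, together with a justification that the resolution computes the mld. The rest of your sketch, including the (iv) strategy of prehomogeneous upper bound plus equivariant $\mathcal{D}$-module lower bound and the (vii) Whitney-from-orbits argument, matches the routes taken in the literature.
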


\begin{rmk}
For {\color{hot}(v)} only the formula for the global topological zeta function is  given in \cite{Roi}. However, the description in \cite{Roi} in terms of pre-partitions of the strata of jet schemes allows the computation of the local topological, in fact even motivic, zeta function at the origin as well. 
\end{rmk}

\section{Proof of  Theorem \ref{thrmLCThrk}}\label{Proof}

We prove now  Theorem \ref{thrmLCThrk}.
By Theorem \ref{thmGenToConeEt}, there exists a tower of cartesian diagrams

\be\label{eqBlubli}
\begin{tikzcd}
(\cM,E) \arrow[d,hookleftarrow]& (X,x) \arrow[l] \arrow[r] \arrow[d,hookleftarrow]& (M ,\bz) = (T_E\cM,\bz)\arrow[d,hookleftarrow]\\
(\cV_1,E) \arrow[d,hookleftarrow]& (X_1,x) \arrow[l] \arrow[r] \arrow[d,hookleftarrow]& (M_1,\bz) = (TC_E\cV_1,\bz) \arrow[d,hookleftarrow]\\
(\cV_2,E) \arrow[d,hookleftarrow]& (X_2,x) \arrow[l] \arrow[r] \arrow[d,hookleftarrow]& (M_2,\bz) =(TC_E\cV_2,\bz)\arrow[d,hookleftarrow]\\
\vdots \arrow[d,hookleftarrow]&\vdots \arrow[d,hookleftarrow]& \vdots \arrow[d,hookleftarrow]\\
(\cV_l,E) & (X_l,x) \arrow[l] \arrow[r] & (M_l,\bz)=(TC_E\cV_l,\bz) \\
\end{tikzcd}
\ee
where: the horizontal maps  are \'etale, sending $E\mapsfrom x\mapsto \bz$, inducing isomorphisms of residue fields at $E,x,\bz$; the vertical maps are closed embeddings of subschemes; $$\cM=\cM_{n,d},
\cV_k=\cV_k(F), M=\HH^1(C,E\otimes E^\vee), \bz \text{ is the origin};$$ $M_k$ is the closed subscheme defined by the ideal generated by the minors of size $l-k+1$ of matrix of linear forms on $\HH^1(C,E\otimes E^\vee) $ determined by $\pi_{E,F}$. 
By the injectivity of $\pi_{E,L}$ this matrix is generic  of size $l'\times l$, cf. Definition \ref{defnGen}.

To simplify notation, we denote by the same symbols, and work with them so from now on, the restriction of the diagram to two Zariski open neighborhoods of $E$ and $x$, respectively in $\cM_{n,d}$ and $X$, respectively. By shrinking these open neighborhoods, we can and will assume that $\cM, \cV_k, X, X_k$ are connected.

(\ref{qo}) These properties follow from Theorem \ref{thrmGen} {\color{hot}(i)-(ii)}, with $a=l$, $b=l'$. 
Note that $M$ and $M_k$ here are the same as $\bA^{ab}$ and $M_k$, respectively, from Theorem \ref{thrmGen} up to the product with an affine space of dimension equal to $h^1(E\otimes E^\vee) - ll'$. The codimension of $M_k$ here agrees with the codimension of $M_k$ from Theorem \ref{thrmGen}.
Since $\cM, \cV_k, X, X_k$ are connected and $M, M_k$ are reduced, irreducible and have rational singularities, it follows that $\cM, \cV_k, X, X_k$ are also reduced, irreducible and have rational singularities.

(\ref{qiii}) The local Bernstein-Sato polynomial, see \cite[Def. 5.11]{BD}, is an embedded (if one does not shift by codimension) analytic invariant. Hence it is the same for the three triples $(\cM,\cV_k,E)$, $(X,X_k,x)$, $(M,M_k,\bz)$. The claim then follows from Theorem \ref{thrmGen} {\color{hot}(iv)}. 

(\ref{qix}) The local lct is an local embedded analytic invariant, since it can be read from the local Bernstein-Sato polynomial, see \cite[Thm. 5.12]{BD}. Hence the local lct is the same for the  triples $(\cM,\cV_k,E)$, $(X,X_k,x)$, $(M,M_k,\bz)$. The claim then follows from Theorem \ref{thrmGen} {\color{hot}(iii)}.

(\ref{qvi}) One performs the same sequence of blowups for the three vertical towers corresponding to $\cM,X,M$ in the diagram (\ref{eqBlubli}) to obtain a diagram of cartesian squares 
$$
\begin{tikzcd}
Y \arrow[d, "f"] & Y_X \arrow[d, "f_X"] \arrow[r] \arrow[l]& Y_M \arrow[d, "f_M"] \\
\cM & X \arrow[r] \arrow[l]& M
\end{tikzcd}
$$
 with the horizontal maps \'etale.
Each step performed to obtain $f_X$  fits into a cartesian diagram with the same step for $f$ and $f_M$. Since the base change of an \'etale map is \'etale, the center to be blown up at each step is smooth since it is so at that step for $f_M$ by Theorem \ref{thrmGen} {\color{hot}(vi)}, and it is irreducible since it is the strict transform of an irreducible variety from below.  Since we blow up a smooth center in a smooth variety, we introduce each time only one new exceptional divisor. Hence there is a one-to-one correspondence between the set of prime divisors $E_i$ obtained for  $f$  and those obtained for $f_X$ and respectively  $f_M$, which we denote by $E_{X,i}$ and $E_{M,i}$, respectively.
By \'etale base change, there is an equality of orders of vanishing 
$$
\ord_{E_i}(\cV_k)=\ord_{E_{X,i}}(X_k) = \ord_{E_{M,i}}(M_k),
$$
and the latter are given by  Theorem \ref{thrmGen} {\color{hot}(vi)}. Finally, since the morphisms of schemes
$$
f^{-1}(\cV_k )\leftarrow f_X^{-1}(X_k) \to f_M^{-1}(M_k)
$$
are obtained by base change, they are \'etale. All three are schemes associated to effective divisors. Since $f_M^{-1}(M_k)$ has simple normal crossings support by  Theorem \ref{thrmGen} {\color{hot}(vi)}, and snc is a notion defined at each point using the completion at that point, it follows that the other two have also simple normal crossings support. Hence $f$ is a log resolution of $(\cM,\cV_k)$.

(\ref{qvii}) These are local properties of the underlying analytic topology, which thus do not change under \'etale maps. The claim then follows from Theorem \ref{thrmGen} {\color{hot}(vii)}.


(\ref{qiv}) The motivic, and thus also the topological, zeta function of the pair $(\cM,\cV_k)$ at $E$ depends only on the completions $(\wh{\cM}_E,\wh{(\cV_k)}_E)$. Hence the local topological zeta function equals that of $(M,M_k)$ at $\bz$. The latter is given in Theorem \ref{thrmGen} {\color{hot}(v)}. 

(\ref{qv})  By the case $k=1$ of  (\ref{qiv}) we conclude that all poles of the local topological zeta function of $(\cM,\cV_1)$ at $E$ are simple poles. By (\ref{qiii}) they are all roots of the local Bernstein-Sato polynomial of  $(\cM,\cV_1)$ at $E$.

(\ref{xi}) The local \'etale embedded isomorphism implies that for a suitable Zariski open neighborhood $U$ of $E$ in $\cM$,  $\mld(\cV_{k+1}\cap U,\cV_k\cap U)= \mld(M_{k+1},M_k)$, where for the latter pair we do not need to shrink to an open neighborhood of $\bz$ since $M_k$ are cones. Then the claim follows from Theorem \ref{thrmGen} (\ref{itMal}). Similarly, $\mld(E',\cV_k\cap U)= \mld(w,M_k)$ where $E'\in U\cap(\cV_{k'}\setminus \cV_{k'+1})$ and $w\in M_{k'}\setminus M_{k'+1}$ and the claim follows from Theorem \ref{thrmGen} (\ref{itMal}) again.
$\hfill\Box$

\end{document}